\def\tank#1{\protected@xdef\@thanks{\@thanks
        \protect\footnotetext[0]{#1}}}
\def\bigfoot{

    \@footnotetext}
\newcommand{\ea}{\end{array}}
\newtheorem{theorem}{Theorem}[section]
\newtheorem{claim}{Claim}[section]
\newtheorem{lemma}{Lemma}[section]
\newtheorem{definition}{Definition}[section]
\newenvironment{proof}{Proof.}
\def \eref#1{\hbox{(\ref{#1})}}
\def\ds{d s}
\begin{document}
\title{\Large \bf Stochastic Porous Media Equation on General
Measure Spaces with Increasing Lipschitz Nonlinearties
\thanks{Research is supported by the DFG' through CRC 701, the
National Natural Science Foundation of China (No.11271169) and the
Project Funded by the Priority Academic Program Development of
Jiangsu Higher Education Institutions.} }

\author{{Michael R\"{o}ckner}$^a$\footnote{E-mail:roeckner@math.uni-bielefeld.de}~~~
{Weina Wu}$^b$\footnote{E-mail:wuweinaforever@163.com}~~~ {Yingchao
Xie}$^c$\footnote{E-mail:ycxie@jsnu.edu.cn}
\\
 \small  a. Faculty of Mathematics, University of Bielefeld, D-33501 Bielefeld, Germany.\\
 \small  b. School of Mathematical Sciences, Nankai University, Tianjin 300071, China. \\
 \small  c. School of Mathematics and Statistics, Jiangsu Normal University, Xuzhou 221116, China.}\,
\date{}
\maketitle

\begin{center}
\begin{minipage}{130mm}
{\bf Abstract.} We prove the existence and uniqueness of
probabilistically strong solutions to stochastic porous media
equations driven by time-dependent multiplicative noise on a general
measure space $(E, \mathscr{B}, \mu)$, and the Laplacian replaced
by a negative definite self-adjoint operator $L$. In the case of
Lipschitz nonlinearities $\Psi$, we in particular generalize
previous results for open $E\subset \mathbb{R}^d$ and
$L\!\!=$Laplacian to fractional Laplacians. We also generalize known
results on general measure spaces, where we succeeded in dropping
the transience assumption on $L$, in extending the set of allowed
initial data and in avoiding the restriction to superlinear behavior
of $\Psi$ at infinity for $L^2(\mu)$-initial data.

\vspace{3mm} {\bf Keywords:} Wiener process; Porous media equation;
Sub-Markovian contractive semigroup.

\end{minipage}
\end{center}

\section{Introduction}
\setcounter{equation}{0}
 \setcounter{definition}{0}

In this paper, we consider stochastic porous media equations (SPMEs)
of the following type:

\begin{equation} \label{eq:1}
\left\{ \begin{aligned}
&dX(t)-L\Psi(X(t))dt=B(t,X(t))dW(t),\ \text{in}\ [0,T]\times E,\\
&X(0)=x \text{~on~} E~ (\text{with~} x\in F^*_{1,2} \text{~or~}
L^2(\mu)),
\end{aligned} \right.
\end{equation}
where $T\in(0,\infty)$ is fixed, $L$ is the negative definite self-adjoint generator of a
sub-Markovian strongly continuous contraction semigroup
$(P_t)_{t\geq0}$ on $L^2(\mu):=L^2(E, \mathscr{B}, \mu)$, and
$(E, \mathscr{B}, \mu)$ is a standard measurable space (\cite{P67}) with a $\sigma$-finite measure $\mu$.
$\Psi(\cdot)\!:\! \mathbb{R}\!\rightarrow\! \mathbb{R}$ is a
monotonically nondecreasing Lipschitz continuous function, $B$ is a
progressively measurable process in the space of Hilbert-Schmidt
operator from $L^2(\mu)$ to $F^*_{1,2}$, $W(t)$ is an
$L^2(\mu)$-valued cylindrical $\mathscr{F}_t$-adapted Wiener process
on a probability space $(\Omega, \mathscr{F}, \mathbb{P})$ with
normal filtration $(\mathscr{F}_t)_{t\geq0}$. For the definition of
the Hilbert space $F^*_{1,2}$ and the precise conditions on $B$ we
refer to the next section.

In the special case when $E=\mathbb{R}^d$, $L$ is equal to the
Laplace operator $\Delta$ and $B$ is time-independent linear
multiplicative, equation \eref{eq:1} was recently analyzed in
\cite{BRR}. The aim of this paper is to prove analogous results as
in \cite{BRR} for the general case. The above framework is inspired
by the work of Fukushima and Kaneko \cite{FK} (see also \cite{HK}).

The main motivation for this generality is that we would like to
cover fractional powers of the Laplacian, i.e.,
$\!L\!=\!-(-\Delta)^\alpha,~\alpha\in(0,1)$, generalized
Schr\"{o}dinger operators, i.e.,
$\!L\!=\!\Delta+2\frac{\nabla\rho}{\rho}\cdot\nabla$, and Laplacians
on fractals (see Section 4 below).

Recently, there has been much work on stochastic versions of the
porous media equations. Based on the variational approach and
monotonicity assumptions on the coefficients, \cite{RRW} presents a
generalization of Krylov-Rozovskii's result \cite{KR} on the
existence and uniqueness of solutions to monotone stochastic
differential equations, which applies to a large class of stochastic
porous media equations. It should be said that in \cite{RRW} (see
also \cite{RW}), $\Psi$ is assumed to be continuous such that
$r\Psi(r)\!\rightarrow\!\infty$ as $r\!\rightarrow\!\infty$. In this
paper we show that for Lipschitz continuous $\Psi$ this condition
can be dropped for initial data in $L^2(\mu)$, extending the
corresponding result from \cite{BRR} to general operators $L$ as
above. We would also like to emphasize that in contrast to
\cite{RRW, RW}, in this paper, we do not assume that $L$ is the
generator of a transient Dirichlet form on
$L^2(E,\mathscr{B},\mu)$. In our case we can drop the transience
assumption. In particular, in contrast to \cite{RRW} (and
\cite{RW}), we do not need any restriction on $d$ when
$E=\mathbb{R}^d$ and $L=-(-\Delta)^\alpha,~\alpha\in(0,1]$. For more
references on stochastic porous media equations we refer to
\cite{BDR1}. In addition, we work in the state space $F^*_{1,2}$
which is larger than the state space $\mathscr{F}_e^*$ considered in
\cite{RRW}, hence we can allow more general initial conditions (as
done in \cite{RW} under assumptions much stronger than transience).

Section 4 of \cite{BRR} deals with the case where $\Psi$ is a
maximal monotone multivalued function with at most polynomial
growth. However, due to the multiplier problem, the existence is
obtained for $d\geq3$ only. We plan to extend also this result to
our more general equation \eref{eq:1}. This will be the subject of
our future work.

The paper is organized as follows: in Section 2, we recall some
notions concerning sub-Markovian semi-groups and introduce a
suitable Gelfand triple. Section 3 is devoted to verify the
existence and uniqueness of strong solutions to \eref{eq:1}. Note
that the Riesz isomorphism $1-L$, through which we identify
$H:=F^*_{1,2}$ and $H^*:= F_{1,2}$, plays an essential role in the
proof. In Section 4, we will apply our results to a number of
examples.

\section{Preliminaries}\label{sec.prelim}
\setcounter{equation}{0}
 \setcounter{definition}{0}

First of all, let us recall some basic definitions and spaces which
will be used throughout the paper (see \cite{Fukushima, FK, HK}).

Let $(E, \mathscr{B},\mu)$ be a $\sigma$-finite measure space.
Let $\{P_t\}_{t\geq0}$ be a strongly continuous contraction
sub-Markovian semigroup on $L^2(\mu)$ with negative definite
self-adjoint generator $(L, D(L))$.

The gamma-transform $V_r (r>0)$ of $\{P_t\}_{t\geq0}$ is defined by
$$V_r=\Gamma(\frac{r}{2})^{-1}\int_0^\infty s^{\frac{r}{2}-1}e^{-s}P_sds.$$
In this paper, we consider the Hilbert space $(F_{1,2},
\|\cdot\|_{F_{1,2}})$ defined by
$$F_{1,2}=V_1(L^2(\mu)),\ \text{with norm}\ \|u\|_{F_{1,2}}=|f|_2\ \ \text{for} \ \ u=V_1f,\ \ f\in L^2(\mu),$$
where the norm $|\cdot|_2$ is defined as
$|f|_2=(\int_E|f|^2d\mu)^{\frac{1}{2}}$. In particular,
$$V_1=(1-L)^{-\frac{1}{2}},\ \text{so that}\ \|u\|_{F_{1,2}}=|V_1^{-1}u|_2=|(1-L)^{\frac{1}{2}}u|_2.$$
The dual space of $F_{1,2}$ is denoted by $F^*_{1,2}$.

Let $H$ be a separable Hilbert space with inner product
$\langle\cdot, \cdot\rangle_H$ and $H^*$ its dual. Let $V$ be a
reflexive Banach space, such that $V\subset H$ continuously and
densely. Then for its dual space $V^*$ it follows that $H^*\subset
V^*$ continuously and densely. Identifying $H$ and $H^*$ via the
Riesz isomorphism we have that
\begin{eqnarray*}
V\subset H\subset V^*
\end{eqnarray*}
continuously and densely and if $_{V^*}\langle\cdot, \cdot\rangle_V$
denotes the dualization between $V^*$ and $V$ (i.e. $_{V^*}\langle
z,v \rangle_V:=z(v)$ for $z\in V^*$, $v\in V$), it follows that
\begin{eqnarray*}
_{V^*}\langle z,v \rangle_V=\langle z,v\rangle_H,\ \text{for\ all}\
z\in H,\ v\in V.
\end{eqnarray*}
$(V,H,V^*)$ is called a Gelfand triple.

In the following, we concentrate on finding a suitable Gelfand
triple $V\subset H\equiv H^*\subset V^*$ with $H:=F^*_{1,2}$. Let
$_{F_{1,2}}\langle\cdot,\ \cdot\rangle_{F^*_{1,2}}$ denote the
duality between $F_{1,2}$ and $F_{1,2}^*$. Define $(1-L):
F_{1,2}\rightarrow F^*_{1,2}$ as follows, given $u\in F_{1,2}$,
\begin{eqnarray}\label{eqnarray1}
_{F^*_{1,2}}\langle (1-L)u,~ v\rangle_{F_{1,2}}:=\int_E
(1-L)^{\frac{1}{2}}u\cdot(1-L)^{\frac{1}{2}}v~ d\mu \
\text{~for~all~} v\in F_{1,2}.
\end{eqnarray}
To show that $(1-L): F_{1,2}\rightarrow F^*_{1,2}$ is well-defined,
we have to prove that the right-hand side of \eref{eqnarray1}
defines a linear continuous function on $v\in F_{1,2}$ with respect
to $\|\cdot\|_{F_{1,2}}$. But for $u\in F_{1,2}$, we have for all
$v\in F_{1,2}$,
\begin{eqnarray*}
\big|_{F^*_{1,2}}\langle (1-L)u,~
v\rangle_{F_{1,2}}\big|&&\!\!\!\!\!\!\!\!=\Big|\int_E
(1-L)^{\frac{1}{2}}u\cdot(1-L)^{\frac{1}{2}}v~ d\mu\Big|\\
&&\!\!\!\!\!\!\!\!=\Big|\big\langle (1-L)^{\frac{1}{2}}u,~ (1-L)^{\frac{1}{2}}v\big\rangle_2\Big|\\
&&\!\!\!\!\!\!\!\!\leq|(1-L)^{\frac{1}{2}}u|_2\cdot|(1-L)^{\frac{1}{2}}v\rangle|_2\\
&&\!\!\!\!\!\!\!\!=\|u\|_{F_{1,2}}\cdot\|v\|_{F_{1,2}}.
\end{eqnarray*}
This implies
$$\|(1-L)u\|_{F_{1,2}^*}\leq \|u\|_{F_{1,2}}.$$
Now we would like to identify $F_{1,2}^*$ with its dual $F_{1,2}$
via the corresponding Riesz isomorphism $R: F^*_{1,2}\rightarrow
F_{1,2}$ defined by $Rx=\langle x, ~\cdot\rangle_{F^*_{1,2}}$, $x\in
F^*_{1,2}$.

\begin{lemma}\label{lemma1}
The map $(1-L): F_{1,2}\rightarrow F^*_{1,2}$ is an isometric
isomorphism. In particular,
\begin{equation}\label{equation2}
\big\langle (1-L)u,~ (1-L)v \big\rangle_{F^*_{1,2}}=\langle u,~ v
\rangle_{F_{1,2}} \ \ \text{for all}\ \ u, v \in F_{1,2}.
\end{equation}
Furthermore, $(1-L)^{-1}: F^*_{1,2}\rightarrow F_{1,2}$ is the Riesz
isomorphism for $F^*_{1,2}$, i.e., for every $u \in F^*_{1,2}$,
\begin{equation}\label{equation3}
\langle u,~\cdot\rangle_{F^*_{1,2}}=\!\!_{F_{1,2}}\langle
(1-L)^{-1}u,~ \cdot \rangle_{F^*_{1,2}}.
\end{equation}
\end{lemma}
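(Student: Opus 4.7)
The plan is to identify $(1-L):F_{1,2}\to F^*_{1,2}$ with the canonical (inverse) Riesz isomorphism of the Hilbert space $F_{1,2}$, from which all three assertions follow with essentially no additional work. The key observation is that by the very definition of $F_{1,2}$ and its norm, $V_1=(1-L)^{-1/2}$ is a bijective isometry from $L^2(\mu)$ onto $F_{1,2}$, so that the inner product on $F_{1,2}$ inducing the norm $\|\cdot\|_{F_{1,2}}$ is
\[
\langle u,v\rangle_{F_{1,2}}=\int_E(1-L)^{\frac{1}{2}}u\cdot(1-L)^{\frac{1}{2}}v\,d\mu,\qquad u,v\in F_{1,2}.
\]
Hence the right-hand side of (\ref{eqnarray1}) coincides with $\langle u,v\rangle_{F_{1,2}}$, and the defining relation for $(1-L)$ becomes ${}_{F^*_{1,2}}\langle(1-L)u,v\rangle_{F_{1,2}}=\langle u,v\rangle_{F_{1,2}}$ for every $v\in F_{1,2}$.

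This identity says precisely that $(1-L)u\in F^*_{1,2}$ is the element representing the continuous linear functional $\langle u,\cdot\rangle_{F_{1,2}}$ on $F_{1,2}$. By the Riesz representation theorem applied to the Hilbert space $F_{1,2}$, the map $J:F_{1,2}\to F^*_{1,2}$, $u\mapsto\langle u,\cdot\rangle_{F_{1,2}}$, is an isometric isomorphism, and we have just shown $(1-L)=J$; this proves the first claim. Identity (\ref{equation2}) is then immediate, since the Hilbert-space structure on $F^*_{1,2}$ is by construction the one making $J$ an isometry: $\langle Ju,Jv\rangle_{F^*_{1,2}}=\langle u,v\rangle_{F_{1,2}}$, which under $J=(1-L)$ is exactly (\ref{equation2}).

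For (\ref{equation3}), fix $u\in F^*_{1,2}$ and set $w:=(1-L)^{-1}u\in F_{1,2}$. Since $(1-L)^{-1}=J^{-1}$ coincides with the Riesz map $R:F^*_{1,2}\to F_{1,2}$ introduced just before the lemma, its defining property reads $\langle u,v\rangle_{F^*_{1,2}}=v(Ru)={}_{F_{1,2}}\langle(1-L)^{-1}u,v\rangle_{F^*_{1,2}}$ for every $v\in F^*_{1,2}$, which is (\ref{equation3}). The whole argument is essentially formal once the inner product on $F_{1,2}$ has been correctly identified; the only step that requires genuine analytic input, and the main thing to check carefully, is that $V_1=(1-L)^{-1/2}$ is a bijective isometry $L^2(\mu)\to F_{1,2}$, which is already built into the definition of $F_{1,2}$, so no serious obstacle remains.
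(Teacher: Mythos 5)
Your proposal is correct and follows essentially the same route as the paper: both use the defining relation \eref{eqnarray1} together with the explicit inner product on $F_{1,2}$ (i.e., $\langle u,v\rangle_{F_{1,2}}=\langle(1-L)^{1/2}u,(1-L)^{1/2}v\rangle_2$) to identify $(1-L)$ with the Riesz isomorphism of $F_{1,2}$, and then deduce \eref{equation2} from the isometry of the Riesz map and \eref{equation3} from the standard fact that its inverse is the Riesz map of the dual space. Your slightly more explicit framing via the canonical map $J$ and the isometry $V_1:L^2(\mu)\to F_{1,2}$ is just a restatement of the paper's argument, so no gap remains.
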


\begin{proof}
For all $u, v\in F_{1,2}$, by \eref{eqnarray1} we know
$$\!\!_{F^*_{1,2}}\langle(1-L)u,~ v\rangle_{F_{1,2}}=\langle(1-L)^{\frac{1}{2}}u,
~(1-L)^{\frac{1}{2}}v\rangle_2=\langle u,~ v\rangle_{F_{1,2}},$$
i.e., $(1-L): F_{1,2}\rightarrow F^*_{1,2}$ is the Riesz isomorphism
for $F_{1,2}$.

In particular, for all $u, v\in F_{1,2}$, since the Riesz
isomorphism is isometric,
\begin{eqnarray}\label{eqnarray2}
\langle(1-L)u,~ (1-L)v\rangle_{F^*_{1,2}}=\langle
u,~v\rangle_{F_{1,2}}.
\end{eqnarray}

Furthermore, for all $u, v\in F^*_{1,2},$
$$\langle u,~ v\rangle_{F^*_{1,2}}=\langle(1-L)^{-1}u,~ (1-L)^{-1}v\rangle_{F_{1,2}}=\!\!_{F_{1,2}}\langle (1-L)^{-1}u,~ v\rangle_{F_{1,2}^*}.$$\hspace{\fill}$\Box$
\end{proof}
In this sense, we identify $F_{1,2}^*$ with $F_{1,2}$ via the Riesz
map $(1-L)^{-1}: F_{1,2}^*\rightarrow F_{1,2}$, thus
$F_{1,2}^*\equiv F_{1,2}$. Note that $L^2(\mu)$ can be considered as
a subset of $F_{1,2}^*$, since for $u\in L^2(\mu)$, the map
$$v\longmapsto\langle u, v\rangle_2,\ \ v\in F_{1,2},$$
belongs to $F_{1,2}^*$. Here $\langle\cdot ,\cdot \rangle_2$ denotes
the usual inner product on $L^2(\mu)$. Obviously, in this sense
$L^2(\mu)\subset F_{1,2}^*$ continuously and densely. Consequently,
we get a Gelfand triple with $V:=L^2(\mu)$, $H:=F_{1,2}^*$,
$$V=L^2(\mu)\subset F_{1,2}^*\subset (L^2(\mu))^*,$$
which satisfies
\begin{eqnarray}\label{eqnarray4}
&&_{V^*}\langle u, v\rangle_V=\langle u,
v\rangle_H,\text{~for~all~}u\in F_{1,2}^*, v\in L^2(\mu).
\end{eqnarray}

\begin{lemma}\label{lemma3}
The map
$$1-L:F_{1,2}\rightarrow F_{1,2}^*$$
extends to a linear isometry
$$1-L:L^2(\mu)\rightarrow(L^2(\mu))^*,$$
and for all $u,v\in L^2(\mu)$,
\begin{eqnarray}\label{eqnarray3}
_{(L^2(\mu))^*}\langle(1-L)u, v\rangle_{L^2(\mu)}=\int_Eu\cdot
v~d\mu.
\end{eqnarray}
\end{lemma}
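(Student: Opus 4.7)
The plan is to identify the desired extension with the canonical Riesz isomorphism of $L^2(\mu)$, and then verify compatibility with $1-L:F_{1,2}\to F_{1,2}^*$ on the dense subspace $F_{1,2}$ by means of Lemma \ref{lemma1} and the Gelfand-triple identifications. The key observation is that once we collapse the $F_{1,2}^*$-inner product via \eqref{equation3}, the right-hand side of \eqref{eqnarray3} is nothing but the ordinary $L^2$-duality, which is continuous on all of $L^2(\mu)\times L^2(\mu)$.

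First I would fix $u\in F_{1,2}$ and $v\in L^2(\mu)$ and compute the $F_{1,2}^*$-inner product $\langle(1-L)u,v\rangle_{F_{1,2}^*}$, where $v$ is regarded as an element of $F_{1,2}^*$ via the embedding $w\mapsto\langle v,w\rangle_2$, $w\in F_{1,2}$. Applying \eqref{equation3} with the Riesz map $(1-L)^{-1}$ gives
$$
\langle(1-L)u,v\rangle_{F_{1,2}^*}= {}_{F_{1,2}}\!\langle(1-L)^{-1}(1-L)u,v\rangle_{F_{1,2}^*}= {}_{F_{1,2}}\!\langle u,v\rangle_{F_{1,2}^*}=\int_E u\cdot v\,d\mu,
$$
the last equality being precisely the definition of the embedding $L^2(\mu)\hookrightarrow F_{1,2}^*$. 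Combining this with the compatibility identity \eqref{eqnarray4} of the Gelfand triple $V=L^2(\mu)\subset H=F_{1,2}^*\subset V^*=(L^2(\mu))^*$ yields
$$
{}_{(L^2(\mu))^*}\!\langle(1-L)u,v\rangle_{L^2(\mu)}=\int_E u\cdot v\,d\mu\qquad\text{for every }u\in F_{1,2},\ v\in L^2(\mu).
$$

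Next I would observe that for fixed $v\in L^2(\mu)$ the right-hand side is continuous in $u\in L^2(\mu)$ with operator norm bounded by $|v|_2$, so the map $u\mapsto\int_E u\cdot(\,\cdot\,)\,d\mu$ is defined on all of $L^2(\mu)$ and takes values in $(L^2(\mu))^*$. This map is in fact the classical Riesz isomorphism $L^2(\mu)\to(L^2(\mu))^*$, which is a linear isometry. Declaring this map to be the extension of $1-L$ to $L^2(\mu)$, the computation above shows that it really does extend $1-L:F_{1,2}\to F_{1,2}^*$ through the embeddings $F_{1,2}\subset L^2(\mu)$ and $F_{1,2}^*\subset(L^2(\mu))^*$, and relation \eqref{eqnarray3} holds by construction.

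There is no serious technical obstacle here; the one point requiring care is bookkeeping of the several dual pairings in the tower $F_{1,2}\subset L^2(\mu)\subset F_{1,2}^*\subset(L^2(\mu))^*$ with the two different Riesz identifications (that of $L^2(\mu)$ and that of $F_{1,2}^*$ through $(1-L)^{-1}$). Once \eqref{equation3} is used to replace the $F_{1,2}^*$-inner product by the $L^2$-integral, the extension and isometry properties are immediate from standard Hilbert space duality.
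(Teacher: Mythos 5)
Your proposal is correct and takes essentially the same route as the paper: the central identity ${}_{(L^2(\mu))^*}\langle(1-L)u,v\rangle_{L^2(\mu)}=\langle u,v\rangle_2$ for $u\in F_{1,2}$, $v\in L^2(\mu)$ is obtained through exactly the same chain, namely \eref{equation3} followed by the Gelfand-triple compatibility \eref{eqnarray4}. The only (harmless) difference is in the packaging: the paper deduces $|(1-L)u|_{(L^2(\mu))^*}\le|u|_2$, extends by continuity, and then verifies the isometry by testing against $v:=|u|_2^{-1}u$, whereas you identify the extension directly with the canonical Riesz isomorphism $u\mapsto\langle u,\cdot\rangle_2$ of $L^2(\mu)$, which yields the isometry and \eref{eqnarray3} at once.
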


\begin{proof}
Let $u\in F_{1,2}$. Since $(1-L)u\in F_{1,2}^*$, from
\eref{equation3} and \eref{eqnarray4} we obtain that for all $v\in
L^2(\mu)$,
\begin{eqnarray}\label{eqnarray5}
_{(L^2(\mu))^*}\!\langle(1-L)u,v\rangle_{L^2(\mu)}=\langle(1-L)u,v\rangle_{F_{1,2}^*}=_{F_{1,2}}\langle
u, v\rangle_{F_{1,2}^*}=\langle u,v\rangle_2,
\end{eqnarray}
the last equality holds since $F_{1,2}\subset L^2(\mu)\subset
F_{1,2}^*$ densely and continuously. Therefore,
$$|(1-L)u|_{(L^2(\mu))^*}\leq |u|_2.$$ In this sense, $1-L$ extends
to a continuous linear map
$$1-L:L^2(\mu)\rightarrow (L^2(\mu))^*$$ such that \eref{eqnarray5}
holds for all $u\in L^2(\mu)$, i.e., \eref{eqnarray3} is proved.
\vspace{2mm}

So, applying it to $u\in L^2(\mu)$ and
$$v:=|u|_2^{-1}u\in L^2(\mu),$$
by \eref{eqnarray5} we obtain that
$$_{V^*}\langle(1-L)u, v \rangle_V=\langle u, v\rangle_2=\langle u, |u|_2^{-1}u\rangle_2=|u|_2,$$
and $|v|_2=1$, so $|(1-L)u|_{V^*}=|u|_V$ and the assertion is
completely proved. \hspace{\fill}$\Box$
\end{proof}

\vspace{2mm} Consider the quadratic form $(\mathcal {E},D(\mathcal
{E}))$ on $L^2(\mu)$ associated with $(L,D(L))$, i.e.
\begin{eqnarray*}\label{eqnarray24}
D(\mathcal {E}):=F_{1,2}
\end{eqnarray*}
and
\begin{eqnarray*}\label{eqnarray25}
\mathcal {E}(u,v):=\mu\big(\sqrt{-L}u\sqrt{-L}v\big);\ u,\ v\in
F_{1,2}.
\end{eqnarray*}
From \cite{MR}, we know $(L,D(L))$ is indeed the associated
Dirichlet operator on $L^2(\mu)$.

\vspace{2mm} Throughout the paper, let
$L^2([0,T]\times\Omega;L^2(\mu))$ denote the space of all
$L^2(\mu)$-valued square-integrable functions on
$[0,T]\times\Omega$, and $C([0,T];F_{1,2}^*)$ the space of all
continuous $F_{1,2}^*$-valued functions on $[0,T]$. For two Hilbert
spaces $H_1$ and $H_2$, the space of Hilbert-Schmidt operators from
$H_1$ to $H_2$ is denoted by $L_2(H_1, H_2)$. For simplicity, the
positive constants $c$, $C$, $C_i$, $i=1,2,3$ used in this paper may
change from line to line. We would like to refer \cite{BDR1} for
more background information and results on SPMEs.

\section{The Main Result}
\setcounter{equation}{0}
 \setcounter{definition}{0}

Consider \eref{eq:1} under the following conditions:

\medskip
\noindent \textbf{(H1)} $\Psi(\cdot)\!: \mathbb{R}\rightarrow\!
\mathbb{R}$ is a monotonically nondecreasing Lipschitz function with
$\Psi(0)=0$.

\vspace{1mm}
\medskip
\noindent \textbf{(H2)} $B\!: [0, T]\times L^2(\mu)\times
\Omega\rightarrow L_2(L^2(\mu), F^*_{1,2})$ is progressively
measurable, i.e. for any $t\in[0,T]$, this mapping restricted to
$[0,t]\times L^2(\mu)\times \Omega$ is measurable w.r.t.
$\mathscr{B}([0,t])\times\mathscr{B}(L^2(\mu))\times \mathscr{F}_t$,
where $\mathscr{B}(\cdot)$ is the Borel $\sigma$-field for a
topological space. For simplicity, below we will write $B(t,u)$
meaning the mapping $\omega\mapsto B(t,u,\omega)$, and $B(t,u)$
satisfies

\vspace{2mm}

\noindent \textbf{(i)} there exists $C_1\in[0, \infty)$ satisfying
$$\|B(\cdot, u)-B(\cdot, v)\|^2_{L_2(L^2(\mu), F^*_{1,2})}\leq C_1\|u-v\|^2_{ F^*_{1,2}}\ \ \text{for\ all}\ u, v\in L^2(\mu)\ \text{on}\ [0, T]\times \Omega;$$

\noindent \textbf{(ii)} there exists $C_2\in(0, \infty)$ satisfiying
$$\|B(\cdot, u)\|^2_{L_2(L^2(\mu), F^*_{1,2})}\leq C_2\|u\|^2_{ F^*_{1,2}}\ \ \text{for\ all}\ u\in L^2(\mu)\ \text{on}\ [0, T]\times \Omega.$$

\noindent \textbf{(iii)} there exists $C_3\in(0, \infty)$ satisfiying
$$\|B(\cdot, u)\|^2_{L_2(L^2(\mu), L^2(\mu))}\leq C_3(|u|^2_2+1)\ \ \text{for\ all}\ u\in L^2(\mu)\ \text{on}\ [0, T]\times \Omega.$$

\medskip
\noindent \textbf{Remark 3.1}~ In the following, we cite one example
from \cite[Section 2]{RW} of $B$, which satisfies
\textbf{(H2)}\textbf{(ii)}.

\medskip
\noindent \textbf{(M)} ~Let $N\in \mathbb{N}\cup {+\infty}$ and
$e_k\in L^2(\mu)\cap L^\infty(\mu)$, $1\leq k\leq N$, be an
orthonormal system in $L^2(\mu)$ such that for every $1\leq k\leq N$
there exists $\xi_k\in(0,\infty)$ such that for all $a\in(0,\infty)$
\begin{eqnarray*}
 &&\big|_{F^*_{1,2}}\langle x, e_k u\rangle_{F_{1,2}}\big|\leq
 \xi_k\|x\|_{H_a}\mathcal
 {E}_a(u,u)^{\frac{1}{2}},~~\text{for~all}~u\in D(\mathcal {E}),
\end{eqnarray*}
where $\mathcal {E}_a:=a\mathcal {E}+\langle\cdot, \cdot\rangle$ on
$D(\mathcal {E})$ and $\|\cdot\|_{H_a}$ denotes the corresponding
norm on the dual space of $D(\mathcal {E})$.

\vspace{2mm} \textbf{(M)} just means that each $e_k$ is a multiplier
on $H_a$ with norm independent of $a>0$. Choose $\mu_k\in
(0,\infty)$ such that
\begin{eqnarray*}
 &&\sum_{k=1}^\infty \xi_k^2\mu^2_k<\infty,
\end{eqnarray*}
and define for $x\in H$, $B(x)\in L_2(L^2(\mu); H)$ by
\begin{eqnarray*}
&&B(x)h:=\sum_{k=1}^\infty \mu_k\langle e_k, h\rangle x\cdot e_k,~
h\in L^2(\mu).
\end{eqnarray*}
Indeed, (extending $\{e_k|k\in \mathbb{N}\}$ to an orthonormal basis
of $L^2(\mu)$) by \textbf{(M)} we have for $x\in H$, $a\in
(0,\infty)$
\begin{eqnarray*}
\|B(x)\|^2_{L_2(L^2(\mu), H_a)}
&&\!\!\!\!\!\!\!\!=\sum\limits_{k=1}^\infty\|B(x)e_k\|^2_{H_a}\\
&&\!\!\!\!\!\!\!\!=\sum_{k=1}^\infty\mu_k^2\|xe_k\|^2_{H_a}\\
&&\!\!\!\!\!\!\!\!\leq\sum_{k=1}^\infty\mu_k^2\xi_k^2\|x\|^2_{H_a}
\end{eqnarray*}
and since $x\rightarrow B(x)$ is linear and $V\subset H$, condition
\textbf{(H2)}\textbf{(ii)} follows. For more examples, we refer to
\cite[Section 2]{RW}.
\begin{definition}
Let $x\in F_{1,2}^*$. A continuous
$(\mathscr{F}_t)_{t\geq0}$-adapted process $X:[0,T]\rightarrow
F_{1,2}^*$ is called strong solution to \eref{eq:1} if the following
conditions are satisfied:
\begin{equation}\label{equ:1}
X\in L^2([0,T]\times\Omega;L^2(\mu))\cap
L^2(\Omega;C([0,T];F_{1,2}^*)),
\end{equation}
\begin{equation}\label{equ:2}
\int_0^\bullet \Psi(X(s))ds\in C([0,T];F_{1,2}),\
\mathbb{P}\text{-a.s.},
\end{equation}
\begin{equation}\label{equ:3}
X(t)-L\int_0^t\Psi(X(s))ds=x+\int_0^tB(s,X(s))dW(s),\ \forall\ t
\in[0,T],\ \mathbb{P}\text{-a.s.}.
\end{equation}
\end{definition}

\begin{theorem} \label{theorem1} Suppose \textbf{(H1)} and \textbf{(H2)} are satisfied.
Then, for each $x\in L^2(\mu)$, there is a unique strong solution
$X$ to \eref{eq:1} and exists $C\in[0,\infty)$ satisfying
$$\mathbb{E}\Big[\sup_{t\in[0, T]}\big|X(t)\big|_2^2\Big]\leq 2|x|_2^2e^{CT}.$$
Assume further that
\begin{equation}\label{eq:2}
\Psi(r)r\geq c r^2,\ \forall\ r\in \mathbb{R},
\end{equation}
where $c\in(0, \infty)$. Then, there is a unique strong solution $X$
to \eref{eq:1} for all $x\in F_{1,2}^*$.
\end{theorem}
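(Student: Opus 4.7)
The approach is to recast \eref{eq:1} as an abstract variational SPDE and apply a Krylov--Rozovskii / Liu--R\"ockner type existence theorem. Two Gelfand triples will be relevant: the \emph{coarse} one
\[
V:=L^2(\mu)\subset H:=F^*_{1,2}\subset V^*,
\]
suited to the coercive case, and the \emph{fine} one
\[
F_{1,2}\subset L^2(\mu)\subset F^*_{1,2},
\]
suited to the $L^2$-energy estimate in the approximation argument. In both, Lemma~\ref{lemma3} makes the drift $A(u):=L\Psi(u)=-(1-L)\Psi(u)+\Psi(u)$ well defined, because \textbf{(H1)} forces $\Psi$ to be Lipschitz on $L^2(\mu)$ (and on $F_{1,2}$, by the sub-Markovian property of $L$) with $\Psi(0)=0$. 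The Riesz identification $H^*\equiv F_{1,2}$ via $1-L$ from Lemma~\ref{lemma1} will let every duality pairing appearing below be rewritten either as an $L^2(\mu)$-integral (via \eref{eqnarray3}) or as an $H$-inner product.

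For the second part ($\Psi(r)r\ge cr^2$), I would work in the coarse triple and use Lemma~\ref{lemma3} to compute
\[
2\,{}_{V^*}\langle A(u),u\rangle_V
=-2\int_{E}\Psi(u)u\,d\mu+2\langle \Psi(u),u\rangle_{F^*_{1,2}}
\le -2c|u|_2^2+\varepsilon|u|_2^2+C_\varepsilon\|u\|_H^2,
\]
the mixed term being bounded through $\|\Psi(u)\|_{F^*_{1,2}}\le|\Psi(u)|_2\le L_\Psi|u|_2$ (continuous embedding $L^2(\mu)\hookrightarrow F^*_{1,2}$ and Lipschitzness of $\Psi$) and then Young's inequality; picking $\varepsilon<2c$ gives coercivity. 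The analogous calculation on $A(u)-A(v)$ combined with \textbf{(H2)(i)} gives weak monotonicity, linear growth of $A$ in $V^*$ is immediate from Lemmas~\ref{lemma1}--\ref{lemma3}, and hemicontinuity is routine. Feeding this into the variational existence theorem produces the unique strong solution for every $x\in H=F^*_{1,2}$.

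For the first part I would approximate $\Psi$ by $\Psi_\lambda(r):=\Psi(r)+\lambda r$, $\lambda>0$. Since $\Psi_\lambda(r)r\ge\lambda r^2$, the second part yields a solution $X_\lambda$. To extract the uniform bound $\mathbb{E}[\sup_t|X_\lambda(t)|_2^2]\le 2|x|_2^2 e^{CT}$ I would re-work the construction in the fine triple, in which the Dirichlet-form / sub-Markovian property of $L$ combined with monotonicity gives $\mathcal{E}(\Psi_\lambda(u),u)\ge\lambda\mathcal{E}(u,u)$, hence coercivity in $F_{1,2}$; this places $X_\lambda$ in $L^2([0,T];F_{1,2})$ for $x\in L^2(\mu)$, so It\^o's formula in this triple applied to $|X_\lambda(t)|_2^2$ is legitimate. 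The drift contribution $-2\int_0^t\mathcal{E}(\Psi_\lambda(X_\lambda),X_\lambda)\,ds$ is non-positive and can be dropped, \textbf{(H2)(ii)} dominates $\|B\|^2_{L_2}$ by $C\|X_\lambda\|^2_H\le C|X_\lambda|_2^2$, and BDG with Gronwall closes the estimate uniformly in $\lambda$. Weak compactness of $\{X_\lambda\}$ in $L^2([0,T]\times\Omega;L^2(\mu))$ together with a Minty-type monotonicity trick using \textbf{(H1)} then identifies a limit $X$ solving \eref{eq:1}, and the uniform bound passes to $X$ by lower semicontinuity.

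Uniqueness in both parts follows by applying It\^o to $\|X-Y\|^2_H$ in the coarse triple: monotonicity contributes $-2\int(\Psi(X)-\Psi(Y))(X-Y)d\mu\le 0$, the residual $H$-pairing is controlled by the same Young-type argument, and \textbf{(H2)(i)} handles the Hilbert--Schmidt term; Gronwall's inequality finishes. The main technical obstacle throughout is controlling the residual term $\langle\Psi(u)-\Psi(v),u-v\rangle_{F^*_{1,2}}$, since $\Psi$ is not \emph{a~priori} Lipschitz as a map $F^*_{1,2}\to F^*_{1,2}$; the remedy is to exploit the positivity-preserving character of the sub-Markovian resolvent $(1-L)^{-1}$ and to trade this term against $L^2$-regularity via Young's inequality, with the requisite $L^2$-control coming either from the coercivity (in the second part) or from the a~priori $L^2$-bound derived in the first part.
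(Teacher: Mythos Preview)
Your treatment of the coercive case (condition \eref{eq:2}) is essentially the paper's Step~1 of Lemma~\ref{lemma2}, just run directly at $\nu=0$; the four variational conditions and the Young-inequality handling of the residual $\langle\Psi(u),u\rangle_{F^*_{1,2}}$ term are verified in the same way.

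The gap is in your derivation of the uniform $L^2(\mu)$-bound for the $\lambda$-approximants in the non-coercive case. You propose to ``re-work the construction in the fine triple $F_{1,2}\subset L^2(\mu)\subset F^*_{1,2}$'' and then apply It\^o's formula to $|X_\lambda(t)|_2^2$. This cannot be done under \textbf{(H2)} as stated: the noise coefficient $B$ takes values only in $L_2(L^2(\mu),F^*_{1,2})$, not in $L_2(L^2(\mu),L^2(\mu))$, so the stochastic integral $\int_0^t B(s,X_\lambda(s))\,dW(s)$ is merely $F^*_{1,2}$-valued. Consequently neither the variational existence theorem in the fine triple (which would require $H=L^2(\mu)$-valued noise) nor the It\^o formula for $|\cdot|_2^2$ is applicable, and the a~priori membership $X_\lambda\in L^2([0,T];F_{1,2})$ on which your argument rests is unjustified. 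The inequality $\mathcal E(\Psi_\lambda(u),u)\ge\lambda\,\mathcal E(u,u)$ is correct in spirit, but it only makes sense once $u\in F_{1,2}$, which is precisely what you are trying to obtain.

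The paper circumvents this by a resolvent regularization: one applies $(\alpha-L)^{-1/2}$ to the approximating equation, so that the smoothed process $(\alpha-L)^{-1/2}X_\lambda$ lies in $F_{1,2}$ and the smoothed noise $(\alpha-L)^{-1/2}B$ is genuinely $L^2(\mu)$-valued; It\^o's formula for $|(\alpha-L)^{-1/2}X_\lambda|_2^2$ is then legitimate, the drift is controlled via the sub-Markov property of $(\alpha-\nu)(\alpha-L)^{-1}$ (this is where \cite[Lemma~5.1]{RW} and an extra parameter $\nu>0$ enter), and one finally multiplies by $\alpha$ and lets $\alpha\to\infty$ using that $\sqrt{\alpha}(\alpha-L)^{-1/2}\to I$ strongly on $L^2(\mu)$. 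This yields the uniform bound without ever needing $B$ to be $L^2(\mu)$-valued.

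A smaller point: for the passage $\lambda\to0$ the paper does not use weak compactness plus a Minty argument, but proves directly that $\{X_\lambda\}$ and $\{\Psi(X_\lambda)\}$ are Cauchy in $L^2(\Omega;C([0,T];F^*_{1,2}))$ and $L^2([0,T]\times\Omega;L^2(\mu))$ respectively, using the monotonicity inequality $(\Psi(r)-\Psi(r'))(r-r')\ge\tilde\alpha|\Psi(r)-\Psi(r')|^2$; this gives strong convergence and makes the limit identification immediate.
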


For the proof of the above theorem, we firstly consider the
approximating equations for \eref{eq:1}:
\begin{equation} \label{eq:3}
\left\{ \begin{aligned}
&dX^\nu(t)+(\nu-L)\Psi(X^\nu(t))dt=B(t,X^\nu(t))dW(t),\ \text{in}\ (0,T)\times E,\\
&X^\nu(0)=x \ \text{on} \ E,
\end{aligned} \right.
\end{equation}
where $\nu\in(0,1)$. And we have the following result for
\eref{eq:3}.

\begin{lemma}\label{lemma2}
Suppose \textbf{(H1)} and \textbf{(H2)} are satisfied. Then, for
each $x\in L^2(\mu)$, there is a unique
$(\mathscr{F}_t)_{t\geq0}$-adapted solution to \eref{eq:3}, denoted
by $X^\nu$, i.e., in particular it has the following properties,
\begin{equation}\label{equ:4}
X^\nu\in L^2\big([0,T]\times\Omega;L^2(\mu)\big)\cap
L^2\big(\Omega;C([0,T];F_{1,2}^*)\big),
\end{equation}
\begin{equation}\label{equ:5}
X^\nu(t)+(\nu-L)\int_0^t\Psi(X^\nu(s))ds=x+\int_0^tB(s,X^\nu(s))dW(s),\
\forall t\in[0,T],~~ \mathbb{P}-a.s..
\end{equation}
Furthermore, there exists $C\in(0,\infty)$ such that for all
$\nu\in(0,1)$,
\begin{equation}\label{equ:6}
\mathbb{E} \Big[\sup_{t\in[0, T]}|X^\nu(t)|_2^2\Big]\leq
2|x|_2^2e^{CT}.
\end{equation}
In addition, if \eref{eq:2} is satisfied, there is a unique solution
$X^\nu$ to \eref{eq:3} satisfying \eref{equ:4} and \eref{equ:5} for
all $x\in F_{1,2}^*$.
\end{lemma}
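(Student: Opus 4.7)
The plan is to cast the approximating equation \eref{eq:3} as an abstract stochastic evolution equation in the Gelfand triple $V:=L^2(\mu)\subset H:=F^*_{1,2}\subset V^*$ and invoke the variational existence/uniqueness theorem for monotone SDEs (Krylov--Rozovskii \cite{KR}; cf.\ also \cite{RRW}). The shift by $\nu I$ turns $-L$ into the strictly positive operator $\nu-L$ (with spectral lower bound $\geq\nu>0$), which is precisely what lets us bypass the transience assumption needed in \cite{RRW,RW}.

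\textbf{Setup.} I would set $A(u):=-(\nu-L)\Psi(u)$. Using the decomposition $\nu-L=(1-L)-(1-\nu)$ together with Lemma~\ref{lemma3} (which extends $1-L$ to a linear isometry $V\to V^*$ satisfying $_{V^*}\langle(1-L)w,v\rangle_V=\int_E wv\,d\mu$ for $w,v\in V$), one obtains the explicit formula
$$_{V^*}\langle A(u),v\rangle_V=-\int_E\Psi(u)\,v\,d\mu+(1-\nu)\langle\Psi(u),v\rangle_H.$$
Combined with \textbf{(H1)} and the continuous embedding $V\hookrightarrow H$ (with $\|\cdot\|_H\leq\|\cdot\|_V$), this yields that $A:V\to V^*$ is Lipschitz, hence demicontinuous, measurable, and of linear growth.

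\textbf{Monotonicity, coercivity, variational theorem.} Applying the same identity to $A(u)-A(v)$ produces
$$_{V^*}\langle A(u)-A(v),u-v\rangle_V=-\int_E(\Psi(u)-\Psi(v))(u-v)\,d\mu+(1-\nu)\langle\Psi(u)-\Psi(v),u-v\rangle_H,$$
where the first term is $\leq0$ by monotonicity of $\Psi$ and the second is controlled by Cauchy--Schwarz, $\|\Psi(u)-\Psi(v)\|_H\leq L_\Psi\|u-v\|_V$, and Young's inequality. The analogous estimate for $_{V^*}\langle A(u),u\rangle_V$ combined with \eref{eq:2} (which delivers $\int\Psi(u)u\,d\mu\geq c\|u\|_V^2$) yields the coercivity $2_{V^*}\langle A(u),u\rangle_V\leq-\delta\|u\|_V^2+C\|u\|_H^2$. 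When only $x\in L^2(\mu)$ is given and \eref{eq:2} is not assumed, I would first regularize by $\Psi_\epsilon(r):=\Psi(r)+\epsilon r$ (which satisfies \eref{eq:2} with $c=\epsilon$), apply the variational theorem to obtain $X^{\nu,\epsilon}$, and then pass $\epsilon\downarrow 0$ via uniform a priori estimates. This produces a solution with the regularity \eref{equ:4} and satisfying \eref{equ:5}.

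\textbf{The $L^2$-bound and the main obstacle.} To derive \eref{equ:6}, I would apply It\^o's formula to $|X^\nu(t)|_2^2$, which requires a Galerkin/mollification argument to justify since $B$ is $H$-valued rather than $V$-valued. Once justified, the drift contributes
$$2\langle X^\nu,-(\nu-L)\Psi(X^\nu)\rangle_2=-2\nu\int_E\Psi(X^\nu)X^\nu\,d\mu+2\langle X^\nu,L\Psi(X^\nu)\rangle_2\leq 0,$$
by self-adjointness of $L$, monotonicity of $\Psi$ with $\Psi(0)=0$, and the fact that normal contractions operate on the Dirichlet form of $L$ (which gives $-\langle u,L\Psi(u)\rangle_2\geq0$ for Lipschitz nondecreasing $\Psi$). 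Gronwall and the Burkholder--Davis--Gundy inequality, together with \textbf{(H2)(ii)} and the embedding $V\hookrightarrow H$ on the noise term, then yield \eref{equ:6}. The extension to $x\in F^*_{1,2}$ under \eref{eq:2} proceeds by the same variational argument with $H$-initial data and $V$-coercivity. The main technical obstacle is the monotonicity step: because Lipschitz maps on $L^2(\mu)$ do not in general induce Lipschitz maps on $F^*_{1,2}$, the cross term $(1-\nu)\langle\Psi(u)-\Psi(v),u-v\rangle_H$ cannot be bounded purely in the $H$-norm and must go through $V$-norms, which is exactly what forces the $\Psi_\epsilon$-regularization when \eref{eq:2} is absent.
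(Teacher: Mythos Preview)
Your overall strategy coincides with the paper's: cast \eref{eq:3} in the Gelfand triple $L^2(\mu)\subset F^*_{1,2}\subset(L^2(\mu))^*$, invoke the variational theorem when \eref{eq:2} holds, and otherwise regularize via $\Psi_\lambda=\Psi+\lambda I$ and pass $\lambda\to0$. Two of your technical steps, however, do not go through as written.

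\textbf{Weak monotonicity.} Your Young-inequality treatment of the cross term produces a contribution $\epsilon\|u-v\|_V^2$ which you cannot absorb: the term $-\int_E(\Psi(u)-\Psi(v))(u-v)\,d\mu$ is merely $\leq 0$ under \textbf{(H1)}, not $\leq -c\|u-v\|_V^2$. The paper instead exploits the elementary consequence of Lipschitz~$+$~monotone,
\[
(\Psi(r)-\Psi(r'))(r-r')\geq\tilde\alpha\,|\Psi(r)-\Psi(r')|^2,\qquad\tilde\alpha:=(1+\mathrm{Lip}\,\Psi)^{-1},
\]
so that the first term yields $-2\tilde\alpha|\Psi(u)-\Psi(v)|_2^2$; the cross term is then split via $\|\Psi(u)-\Psi(v)\|_H\leq|\Psi(u)-\Psi(v)|_2$ and Young so that the \emph{same} quantity $|\Psi(u)-\Psi(v)|_2^2$ is absorbed. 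Weak monotonicity thus holds under \textbf{(H1)} alone; the obstruction forcing the $\Psi_\lambda$-regularization is coercivity, not monotonicity.

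\textbf{The $L^2$-bound \eref{equ:6}.} The pairing $\langle X^\nu,L\Psi(X^\nu)\rangle_2$ is not defined: $\Psi(X^\nu)\in L^2(\mu)$ only, so $L\Psi(X^\nu)\in V^*$, not $L^2(\mu)$. Moreover, ``normal contractions operate on the Dirichlet form'' is the wrong property --- it gives $\mathcal E(\Psi(u),\Psi(u))\leq C\,\mathcal E(u,u)$, not $\mathcal E(u,\Psi(u))\geq0$ --- and in the paper's generality (a $\sigma$-finite measure space with no topology, hence no Beurling--Deny formula) the latter is not directly available. The paper resolves both issues by mollifying the equation for $X^\nu_\lambda$ with $(\alpha-L)^{-1/2}$, so that It\^o's formula in $L^2(\mu)$ is legitimate; the drift pairing becomes $\langle\Psi(X^\nu_\lambda),(P-I)X^\nu_\lambda\rangle_2$ with $P=(\alpha-\nu)(\alpha-L)^{-1}$, and its sign follows from an explicit Green-kernel identity (\cite[Lemma~5.1]{RW}) using only $g_\alpha\geq0$ and $P1\leq1$. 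Letting $\alpha\to\infty$ by monotone convergence gives the bound for $X^\nu_\lambda$ uniformly in $\lambda,\nu$; lower semicontinuity after $\lambda\to0$ then yields \eref{equ:6}. Note in particular that the bound has to be established at the $\lambda$-level, since it is the uniform-in-$\lambda$ estimate that drives the passage $\lambda\to0$.
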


\begin{proof}We proceed in two steps. In Step 1, we consider
the case when the initial value $x\in F_{1,2}^*$ and that
\eref{eq:2} is satisfied. In Step 2, we will prove the existence and
uniqueness result when $x\in L^2(\mu)$ and without assumption
\eref{eq:2}, by replacing $\Psi$ with $\Psi+\lambda I$,
$\lambda\in(0,1)$ and then letting $\lambda\rightarrow 0$.

\vspace{1mm}
\medskip
\noindent \textbf{Step 1:}
 Assume $x\in F_{1,2}^*$ and that
\eref{eq:2} is satisfied. Set $V\!\!:=L^2(\mu)$, $H\!\!:=F_{1,2}^*$,
$A u\!\!:=(L-\nu)\Psi(u)$ for $u\in V$. The space $F_{1,2}^*$ is
equipped with the equivalent norm
$$\|\eta\|_{F_{1,2,\nu}^*}:=\langle\eta, (\nu-L)^{-1}\eta\rangle^{\frac{1}{2}},~~\eta\in F_{1,2}^*.$$
Under the Gelfand triple $V\subset H\subset V^*$, we shall prove the
existence and uniqueness of the solution to \eref{eq:3} by using
\cite[Theorem 4.2.4]{LR} (or \cite[Theorem 4.2.4]{PM}).

\vspace{2mm} In the following, we shall verify the four conditions
of the existence and uniqueness theorem in \cite{LR, PM}.

\medskip
\textbf{(i)} (Hemicontinuity)

Let $u, v, w\in V=L^2(\mu)$. We have to show for
$\lambda\in\mathbb{R}$, $|\lambda|\leq1$,
\begin{eqnarray*}
\lim_{\lambda\rightarrow0}\ _{V^*}\!\langle A(u+\lambda v),
w\rangle_V-_{V^*}\!\!\langle Au, w\rangle_V=0.
\end{eqnarray*}
By Lemma 2.2
\begin{eqnarray*}
&&_{V^*}\langle A(u+\lambda v), w\rangle_V\\
=\!\!\!\!\!\!\!\!&&_{V^*}\!\langle(L-\nu)\Psi(u+\lambda v),w\rangle_V\\
=\!\!\!\!\!\!\!\!&&-_{V^*}\!\langle(1-L)\Psi(u+\lambda v),w\rangle_V+(1-\nu)_{V^*}\!\big\langle(1-L)(1-L)^{-1}\Psi(u+\lambda v),w\big\rangle_V\\
=\!\!\!\!\!\!\!\!&&-\langle\Psi(u+\lambda
v),w\rangle_2+(1-\nu)\langle(1-L)^{-1}\Psi(u+\lambda v),w\rangle_2\\
=\!\!\!\!\!\!\!\!&&-\int_E \Psi(u+\lambda v)\cdot w
d\mu+(1-\nu)\int_E(1-L)^{-1}\Psi(u+\lambda v)\cdot wd\mu.
\end{eqnarray*}
By the Lipschitz continuity of $\Psi$ and denoting $k:=Lip\Psi$, the
first integrand in the right-hand side of the above equality is
bounded by
\begin{eqnarray*}
&&|\Psi(u+\lambda v)|\cdot|w|\leq k(|u|+|v|)\cdot|w|,
\end{eqnarray*}
which by H\"{o}lder's inequality is in $L^1(\mu)$. Since
$(1-L)^{-1}$ is a contraction on $L^2(\mu)$ (\cite[Chapter I]{MR}), in order to prove the convergence of
$(1-L)^{-1}\Psi(u+\lambda v)\cdot w$ in $L^1(\mu)$, it is sufficient
to show the convergence of $\Psi(u+\lambda v)$ in $L^2(\mu)$, which
is obvious because $\Psi$ is Lipschitz and
$$|\Psi(u+\lambda v)|\leq k(|u|+|v|).$$

\medskip
\textbf{(ii)} (Weak Monotonicity)

Let $u, v\in V=L^2(\mu)$, then by Lemma 2.2 and \eref{eqnarray4}
\begin{eqnarray}\label{eqna2}
&&2_{V^*}\langle Au-A v, u-v\rangle_V+\|B(\cdot,u)-B(\cdot, v)\|^2_{L_2(L^2(\mu),F^*_{1,2})}\nonumber\\
&&\!\!\!\!\!\!\!\!=2_{V^*}\big\langle(L-\nu)(\Psi(u)-\Psi(v)), u-v\big\rangle_V+\|B(\cdot,u)-B(\cdot, v)\|^2_{L_2(L^2(\mu),F^*_{1,2})}\nonumber\\
&&\!\!\!\!\!\!\!\!=-2_{V^*}\langle (1-L)(\Psi(u)-\Psi(v)),
u-v\rangle_V+2\ (1-\nu)_{V^*}\big\langle\Psi(u)-\Psi(v),
u-v\big\rangle_V\nonumber\\
&&\!\!\!\!\!\!\!\!~~~ +\|B(\cdot,u)-B(\cdot,
v)\|^2_{L_2(L^2(\mu),F^*_{1,2})}\nonumber\\
&&\!\!\!\!\!\!\!\!=-2\big\langle (\Psi(u)-\Psi(v)),
u-v\big\rangle_2+2(1-\nu)\big\langle\Psi(u)-\Psi(v),
u-v\big\rangle_{F^*_{1,2}}\nonumber\\
&&\!\!\!\!\!\!\!\!~~~ +\|B(\cdot,u)-B(\cdot,
v)\|^2_{L_2(L^2(\mu),F^*_{1,2})}.
\end{eqnarray}
Set $\tilde{\alpha}:=(Lip\Psi+1)^{-1}$. By assumption \noindent
\textbf{(H1)} on $\Psi$, we know that
\begin{eqnarray}\label{eqn2}
\big(\Psi(r)-\Psi(r')\big)(r-r')\geq
\tilde{\alpha}|\Psi(r)-\Psi(r')|^2, \ \ \forall r,\ r'\in\
\mathbb{R}.
\end{eqnarray}
Since $L^2(\mu)\subset F^*_{1,2}$ continuously, by Young's
inequality
\begin{eqnarray}\label{eqn3}
&&\big\langle\Psi(u)-\Psi(v),u-v\big\rangle_{F^*_{1,2}}\nonumber\\
&&\!\!\!\!\!\!\!\!\leq\|\Psi(u)-\Psi(v)\|_{F^*_{1,2}}\cdot\|u-v\|_{F^*_{1,2}}\nonumber\\
&&\!\!\!\!\!\!\!\!\leq |\Psi(u)-\Psi(v)|_2\cdot\|u-v\|_{F^*_{1,2}}\nonumber\\
&&\!\!\!\!\!\!\!\!\leq\frac{\tilde{\alpha}}{1-\nu}\big|\Psi(u)-\Psi(v)\big|_2^2+\frac{1-\nu}{\tilde{\alpha}}\|u-v\|^2_{F^*_{1,2}}.
\end{eqnarray}
By \noindent \textbf{(H2)} \noindent \textbf{(i)}, and taking
\eref{eqn2}, \eref{eqn3} into account, \eref{eqna2} is dominated by
\begin{eqnarray*}
&&-2\tilde{\alpha}\big|\Psi(u)-\Psi(v)\big|^2_2+2\tilde{\alpha}\big|\Psi(u)-\Psi(v)\big|^2_2+\frac{2(1-\nu)^2}{\tilde{\alpha}}\|u-v\|^2_{F^*_{1,2}}+C_1\|u-v\|^2_{F^*_{1,2}}\\
&&\!\!\!\!\!\!\!\!=
\Big[\frac{2(1-\nu)^2}{\tilde{\alpha}}+C_1\Big]\cdot\|u-v\|^2_{F^*_{1,2}}.
\end{eqnarray*}
Hence weak monotonicity holds.

\medskip
\textbf{(iii)} (Coercivity)

Let $u\in L^2(\mu)$. By Lemma 2.2 and \eref{eqnarray4}
\begin{eqnarray}\label{eqn4}
&&2_{V^*}\langle Au, u\rangle_V+\|B(\cdot, u)\|^2_{L_2(L^2(\mu), F^*_{1,2})}\nonumber\\
&&\!\!\!\!\!\!\!\!=-2_{V^*}\big\langle(1-L)\Psi(u),u\big\rangle_V+2(1-\nu)_{V^*}\langle\Psi(u),u\rangle_V+\|B(\cdot, u)\|^2_{L_2(L^2(\mu), F^*_{1,2})}\nonumber\\
&&\!\!\!\!\!\!\!\!=-2\langle\Psi(u),u\rangle_2+2(1-\nu)\langle\Psi(u),u\rangle_{F^*_{1,2}}+\|B(\cdot,
u)\|^2_{L_2(L^2(\mu), F^*_{1,2})}.
\end{eqnarray}
By \eref{eq:2}
\begin{eqnarray}\label{eqn5}
-2\langle\Psi(u),u\rangle_2=-2\int_E \Psi(u)\cdot u d\mu\leq
-2c|u|^2_2.
\end{eqnarray}
Since $L^2(\mu)\subset F^*_{1,2}$ continuously, by Young's
inequality for $\varepsilon\in(0,1)$
\begin{eqnarray}\label{eqn6}
\big\langle\Psi(u),u\big\rangle_{F^*_{1,2}}&&\!\!\!\!\!\!\!\!\leq\|\Psi(u)\|_{F^*_{1,2}}\cdot\|u\|_{F^*_{1,2}}\nonumber\\
&&\!\!\!\!\!\!\!\!\leq |\Psi(u)|_2\cdot\|u\|_{F^*_{1,2}}\nonumber\\
&&\!\!\!\!\!\!\!\!\leq
\varepsilon^2k^2|u|_2^2+\frac{1}{\varepsilon^2}\|u\|^2_{F^*_{1,2}}.
\end{eqnarray}
By \noindent \textbf{(H2)} \noindent \textbf{(ii)}, and taking
\eref{eqn5} and \eref{eqn6} into account, \eref{eqn4} is dominated
by
\begin{eqnarray}\label{eqn7}
\big[-2c+2\varepsilon^2k^2(1-\nu)\big]\cdot|u|_2^2+\Big[\frac{2(1-\nu)}{\varepsilon^2}+C_2\Big]\cdot\|u\|^2_{F^*_{1,2}}.\nonumber
\end{eqnarray}
Choosing $\varepsilon$ small enough, $-2c+2\varepsilon^2k^2(1-\nu)$
becomes negative, which implies the coercivity.

\medskip
\textbf{(iv)} (Boundedness)

Let $u\in L^2(\mu)$. Since
$$|Au|_{V^*}=|(L-\nu)\Psi(u)|_{V^*}=\sup_{|v|_2=1}\ _{V^*}\!\langle
(L-\nu)\Psi(u), v\rangle_V,$$ by Lemma 2.2 and since $(1-L)^{-1}$ is
a contraction on $L^2(\mu)$, we deduce
\begin{eqnarray*}
&&_{V^*}\!\langle (L-\nu)\Psi(u), v\rangle_V\\
&&\!\!\!\!\!\!\!\!=-_{V^*}\langle(1-L)\Psi(u),v\rangle_V+(1-\nu)_{V^*}\big\langle(1-L)(1-L)^{-1}\Psi(u),v\big\rangle_V\\
&&\!\!\!\!\!\!\!\!=-\langle\Psi(u),v\rangle_2+(1-\nu)\langle(1-L)^{-1}\Psi(u),v\rangle_2\\
&&\!\!\!\!\!\!\!\!\leq|\Psi(u)|_2\cdot|v|_2+(1-\nu)|\Psi(u)|_2\cdot|v|_2.
\end{eqnarray*}
So
\begin{eqnarray*}
&&|Au|_{V^*}\leq2|\Psi(u)|_2\leq2k|u|_2.
\end{eqnarray*}
Hence the boundedness holds. \vspace{2mm}

By \cite[Theorem 4.2.4]{LR}, there exists a unique solution to
\eref{eq:3}, denoted by $X^\nu$, which takes values in $F^*_{1,2}$
and satisfies \eref{equ:4} and \eref{equ:5}.

\vspace{2mm}
\medskip
\textbf{Step 2:} If $\Psi$ does not satisfy \eref{eq:2}, the above
(i), (ii) and (iv) still hold, but (iii) not in general. In this
case, we will approximate $\Psi$ by $\Psi+\lambda I$,
$\lambda\in(0,1)$.

Consider the approximating equation:
\begin{equation}\label{eq:4}
\left\{ \begin{aligned}
&X^\nu_\lambda(t)+(\nu-L)\big(\Psi(X^\nu_\lambda(t))+\lambda X^\nu_\lambda(t)\big)dt=B(t,X^\nu_\lambda(t))dW(t),\ \text{in}\ [0,T]\times E,\\
&X^\nu_\lambda(0)=x \in F^*_{1,2}\ \ \text{on} \ E.
\end{aligned} \right.
\end{equation}

By \cite[Theorem 4.2.4]{LR}, it is easy to prove that there is a
solution $X^\nu_\lambda$ to \eref{eq:4} which satisfies
$X^\nu_\lambda\in L^2\big([0,T]\times\Omega;L^2(\mu)\big)\cap
L^2\big(\Omega;C([0,T];F_{1,2}^*)\big)$,
$$X^\nu_\lambda(t)+(\nu-L)\int_0^t\Psi(X^\nu_\lambda(t))+\lambda X^\nu_\lambda(t)~ds=x+\int_0^tB(s,X^\nu_\lambda(s))dW(s),~~~\mathbb{P}-a.s.$$
and
\begin{eqnarray}\label{eqna1}
\mathbb{E}\Big[\sup_{t\in[0,T]}\|X^\nu_\lambda(t)\|^2_{F^*_{1,2}}\Big]<\infty.
\end{eqnarray}

In the following, we want to prove that the sequence
$\{X^\nu_\lambda\}$ converges to the solution of \eref{eq:3} as
$\lambda\to 0$. From now on, we assume that the initial value $x\in
L^2(\mu)$.

\begin{claim}\label{cla:1}
$$\mathbb{E}\Big[\sup_{s\in[0,T]}\big|X^\nu_\lambda(s)\big|_2^2\Big]+4\lambda \mathbb{E}\int_0^t\big\|X^\nu_\lambda(s)\big\|^2_{F_{1,2}}ds\leq C_T(|x|^2_2+1),$$
where $C_T$ is independent of $\nu, \lambda\in(0,1)$.
\end{claim}

\begin{proof}
Rewrite \eref{eq:4}, for $t\in[0,T]$,
\begin{equation}\label{eq:5}
X^\nu_\lambda(t)=x+\int_0^t(L-\nu)\big(\Psi(X^\nu_\lambda(s))+\lambda
X^\nu_\lambda(s)\big)ds+\int_0^tB(s,X^\nu_\lambda(s))dW(s).
\end{equation}
For $\alpha>\nu$, applying the operator
$(\alpha-L)^{-\frac{1}{2}}:F^*_{1,2}\rightarrow L^2(\mu)$ to both
sides of the above equation, we get
\begin{eqnarray*}
&&(\alpha-L)^{-\frac{1}{2}}X^\nu_\lambda(t)\\
=\!\!\!\!\!\!\!\!&&(\alpha-L)^{-\frac{1}{2}}x+\int_0^t(L-\nu)(\alpha-L)^{-\frac{1}{2}}
\big(\Psi(X^\nu_\lambda(s))+\lambda X^\nu_\lambda(s)\big)ds\\
&&+\int_0^t(\alpha-L)^{-\frac{1}{2}}B(s,X^\nu_\lambda(s))dW(s).
\end{eqnarray*}
Applying It\^{o}'s formula (\cite[Theorem 4.2.5]{LR}) with
$H=L^2(\mu)$, we obtain, for $t\in[0,T]$,
\begin{eqnarray}\label{eq:6}
&&\big|(\alpha-L)^{-\frac{1}{2}}X^\nu_\lambda(t)\big|^2_2\nonumber\\
=\!\!\!\!\!\!\!\!&&\big|(\alpha-L)^{-\frac{1}{2}}x\big|^2_2
+2\int_0^t\!\!\!\!
_{F^*_{1,2}}\big\langle(L-\nu)(\alpha-L)^{-\frac{1}{2}}\Psi(X^\nu_\lambda(s)),(\alpha-L)^{-\frac{1}{2}}
X^\nu_\lambda(s)\big\rangle_{F_{1,2}}ds\nonumber\\
&&+2\lambda\int_0^t\!\!\!\!
_{F^*_{1,2}}\big\langle(L-\nu)(\alpha-L)^{-\frac{1}{2}}X^\nu_\lambda(s),(\alpha-L)^{-\frac{1}{2}}
X^\nu_\lambda(s)\big\rangle_{F_{1,2}}\ds\nonumber\\
&&+\int_0^t\big\|(\alpha-L)^{-\frac{1}{2}}B(s,X^\nu_\lambda(s))\big\|^2_{L_2(L^2(\mu),L^2(\mu))}ds\nonumber\\
&&+2\int_0^t\big\langle(\alpha-L)^{-\frac{1}{2}}X^\nu_\lambda(s),(\alpha-L)^{-\frac{1}{2}}
B(s,X^\nu_\lambda(s))dW(s)\big\rangle_2.
\end{eqnarray}

To estimate the second term in the right hand side of \eref{eq:6}, set $P:=(\alpha-\nu)(\alpha-L)^{-1}$. For $f\in L^2(\mu)$, we have
\begin{eqnarray*}
(P-I)f=\!\!\!\!\!\!\!\!&&\big[(\alpha-L)^{-\frac{1}{2}}(\alpha-\nu)(\alpha-L)^{-\frac{1}{2}}
-(\alpha-L)^{-\frac{1}{2}}(\alpha-L)(\alpha-L)^{-\frac{1}{2}}\big]f\\
=\!\!\!\!\!\!\!\!&&\big[(\alpha-L)^{-\frac{1}{2}}(L-\nu)(\alpha-L)^{-\frac{1}{2}}\big]f.
\end{eqnarray*}
Since $L$ is the infinitesimal generator of a symmetric sub-Markovian strongly continuous contraction semigroup $(P_t)_{t\geq0}$ on $L^2(\mu)$, then, $P$ is a symmetric sub-Markovian contraction on $L^2(\mu)$. From \cite[Lemma 5.1 (i)]{RW}, there exists a probability kernel $p$ on $(E,\mathscr{B},\mu)$ such that for all $f\in L^2(\mu)$
\begin{eqnarray*}
Pf(\xi):=\int_Ef(\tilde{\xi})p(\xi,d\tilde{\xi}),~~\xi\in E.
\end{eqnarray*}
Applying \cite[Lemma 5.1 (ii)]{RW} (here the assumption that $(E,\mathscr{B},\mu)$ is a standard measurable space is needed) with $f:=X^\nu_\lambda(s)$ and
$g:=\Psi(X^\nu_\lambda(s))$, since $\Psi$ is monotone, $\Psi(0)=0$ and $P1\leq1$, one obtains
\begin{eqnarray}\label{eq2}
&&2\int_0^t\!\!\!\!
_{F^*_{1,2}}\big\langle(L-\nu)(\alpha-L)^{-\frac{1}{2}}\Psi(X^\nu_\lambda(s)),(\alpha-L)^{-\frac{1}{2}}
X^\nu_\lambda(s)\big\rangle\!
_{F_{1,2}}ds\nonumber\\
=\!\!\!\!\!\!\!\!&&2\int_0^t\big\langle \Psi(X^\nu_\lambda(s)),(P-I)X^\nu_\lambda(s)\big\rangle_2ds\nonumber\\
=\!\!\!\!\!\!\!\!&&-\int_0^t\int_E\int_E\big[\Psi(f(\widetilde{\xi}))-\Psi(f(\xi))\big]\big[f(\widetilde{\xi})
-f(\xi)\big]
p(\xi,d\widetilde{\xi})d\xi ds\nonumber\\
\!\!\!\!\!\!\!\!&&-2\int_0^t\int_E(1-P1)f(\xi)\cdot \Psi(f(\xi))d\xi ds\nonumber\\
\leq\!\!\!\!\!\!\!\!&&0.
\end{eqnarray}
For the second integral on the right hand side of \eref{eq:6}, rewrite $L-\nu=-(1-L)+(1-\nu)$, by \eref{eqnarray3} we have for all $\nu, \lambda\in(0,1)$
\begin{eqnarray}\label{eq1}
&&2\lambda\int_0^t\!\!\!\!
_{F^*_{1,2}}\big\langle(L-\nu)(\alpha-L)^{-\frac{1}{2}}X^\nu_\lambda(s),(\alpha-L)^{-\frac{1}{2}}X^\nu_\lambda(s)\big\rangle_{F_{1,2}}ds\nonumber\\
=\!\!\!\!\!\!\!\!&&-2\lambda\int_0^t\ _{F^*_{1,2}}\big\langle(1-L)(\alpha-L)^{-\frac{1}{2}}X^\nu_\lambda(s),(\alpha-L)^{-\frac{1}{2}}X^\nu_\lambda(s)\big\rangle_{F_{1,2}}ds\nonumber\\
&&+2\lambda(1-\nu)\int_0^t\ _{F^*_{1,2}}\big\langle(\alpha-L)^{-\frac{1}{2}}X^\nu_\lambda(s),(\alpha-L)^{-\frac{1}{2}}X^\nu_\lambda(s)\big\rangle_{F_{1,2}}ds\nonumber\\
=\!\!\!\!\!\!\!\!&&-2\lambda\int_0^t\big\|(\alpha-L)^{-\frac{1}{2}}X^\nu_\lambda(s)\big\|^2_{F_{1,2}}ds+2\lambda(1-\nu)\int_0^t|(\alpha-L)^{-\frac{1}{2}}X^\nu_\lambda(s)|^2_2ds\nonumber\\
\leq\!\!\!\!\!\!\!\!&&-2\lambda\int_0^t\big\|(\alpha-L)^{-\frac{1}{2}}X^\nu_\lambda(s)\big\|^2_{F_{1,2}}ds+2\int_0^t|(\alpha-L)^{-\frac{1}{2}}X^\nu_\lambda(s)|^2_2ds.
\end{eqnarray}
Multiplying both sides of \eref{eq:6} by $\alpha$, taking \eref{eq2} and \eref{eq1} into account, we have for all $t\in[0, T]$,
\begin{eqnarray}\label{eq3}
&&\big|\sqrt{\alpha}(\alpha-L)^{-\frac{1}{2}}X^\nu_\lambda(t)\big|_2^2+2\lambda\int_0^t\big\|\sqrt{\alpha}(\alpha-L)^{-\frac{1}{2}}X^\nu_\lambda(s)\big\|^2_{F_{1,2}}ds\nonumber\\
\leq\!\!\!\!\!\!\!\!&&\big|\sqrt{\alpha}(\alpha-L)^{-\frac{1}{2}}x\big|_2^2+2\int_0^t|\sqrt{\alpha}(\alpha-L)^{-\frac{1}{2}}X^\nu_\lambda(s)|_2^2ds\nonumber\\
\!\!\!\!\!\!\!\!&&+\int_0^t\big\|\sqrt{\alpha}(\alpha-L)^{-\frac{1}{2}}B(s,X^\nu_\lambda(s))\big\|^2_{L_2(L^2(\mu),L^2(\mu))}ds\nonumber\\
\!\!\!\!\!\!\!\!&&+2\int_0^t\big\langle\sqrt{\alpha}(\alpha-L)^{-\frac{1}{2}}X^\nu_\lambda(s),
\sqrt{\alpha}(\alpha-L)^{-\frac{1}{2}}B(s,X^\nu_\lambda(s))dW(s)\big\rangle_2.
\end{eqnarray}

Before doing further estimates, we need to prove that
\begin{eqnarray}\label{contraction}
\sqrt{\alpha}(\alpha-L)^{-\frac{1}{2}} \text{~ is~ a~ contraction~ on}~ L^2(\mu),
\end{eqnarray}
and
\begin{eqnarray}\label{convergence}
|\sqrt{\alpha}(\alpha-L)^{-\frac{1}{2}}u|^2_2\rightarrow |u|_2~~~\text{in}~~L^2(\mu)~~~\text{as}~~\alpha\rightarrow\infty.
\end{eqnarray}
These are true because $\alpha(\alpha-L)^{-1}$ is a contraction on $L^2(\mu)$, then from \cite[Page:8, Proposition 1.3 (ii)]{MR}, we know that $|\alpha(\alpha-L)^{-1}|\leq1$ and for all $u\in L^2(\mu)$,
\begin{eqnarray}\label{contraction1}
\alpha(\alpha-L)^{-1}u\rightarrow u~~~\text{in}~~L^2(\mu)~~~\text{as}~~\alpha\rightarrow\infty,
\end{eqnarray}
so,
\begin{eqnarray}\label{contraction2}
|\sqrt{\alpha}(\alpha-L)^{-\frac{1}{2}}u|^2_2&&\!\!\!\!\!\!\!\!=\langle \sqrt{\alpha}(\alpha-L)^{-\frac{1}{2}}u,\sqrt{\alpha}(\alpha-L)^{-\frac{1}{2}}u\rangle_2\nonumber\\
=\!\!\!\!\!\!\!\!&&_{F_{1,2}}\langle \alpha(\alpha-L)^{-1}u,u\rangle_{F^*_{1,2}}\nonumber\\
=\!\!\!\!\!\!\!\!&&\langle \alpha(\alpha-L)^{-1}u,u\rangle_2\nonumber\\
\leq \!\!\!\!\!\!\!\!&&|\alpha(\alpha-L)^{-1}u|_2\cdot|u|_2\nonumber\\
\leq\!\!\!\!\!\!\!\!&& |u|_2^2,
\end{eqnarray}
which indicates \eref{contraction}, from the third step of \eref{contraction2} and \eref{contraction1} we can get \eref{convergence}.

By \eref{contraction2} and \textbf{(H2)}\textbf{(iii)}, we have
\begin{eqnarray}\label{B}
&&\int_0^t\big\|\sqrt{\alpha}(\alpha-L)^{-\frac{1}{2}}B(s, X^\nu_\lambda(s))\big\|^2_{L_2(L^2(\mu),L^2(\mu))}ds\nonumber\\
\le\!\!\!\!\!\!\!\!&&\int_0^t\big\|B(s, X^\nu_\lambda(s))\big\|^2_{L_2(L^2(\mu),L^2(\mu))}ds\nonumber\\
\le\!\!\!\!\!\!\!\!&& C_3\int_0^t(|X^\nu_\lambda(s)|^2_2+1)ds.
\end{eqnarray}
Taking \eref{B} into \eref{eq3}, by \eref{contraction2}, and using the Burkholder-Davis-Gundy (BDG) inequality (with $p=1$), we
obtain
\begin{eqnarray}\label{eqnarray11}
&&\mathbb{E}\Big[\sup_{s\in[0,t]}\big|\sqrt{\alpha}(\alpha-L)^{-\frac{1}{2}}X^\nu_\lambda(s)\big|_2^2\Big]+2\lambda\mathbb{E}\int_0^t\big\|\sqrt{\alpha}(\alpha-L)^{-\frac{1}{2}}X^\nu_\lambda(s)\big\|^2_{F_{1,2}}ds\nonumber\\
\le\!\!\!\!\!\!\!\!&&\big|\sqrt{\alpha}(\alpha-L)^{-\frac{1}{2}}x\big|_2^2+2\mathbb{E}\int_0^t|X^\nu_\lambda(s)|^2_2ds+C_3\int_0^t(|X^\nu_\lambda(s)|^2_2+1)ds\nonumber\\
&&+6\mathbb{E}\left[\int_0^t\big|\sqrt{\alpha}(\alpha-L)^{-\frac{1}{2}}X^\nu_\lambda(s)\big|_2^2\cdot\big|\sqrt{\alpha}
(\alpha-L)^{-\frac{1}{2}}B(s,X^\nu_\lambda(s))\big|_{L_2(L^2(\mu),L^2(\mu))}^2ds\right]^{\frac12}.
\end{eqnarray}
The last term of the right hand side of the above inequality can be
estimated by
\begin{eqnarray}\label{eqnarray12}
&&6\mathbb{E}\left[\sup_{s\in[0,t]}\big|\sqrt{\alpha}(\alpha-L)^{-\frac{1}{2}}X^\nu_\lambda(s)\big|_2^2\cdot\int_0^t\big|\sqrt{\alpha}(\alpha-L)^{-\frac{1}{2}}B(s,X^\nu_\lambda(s))\big|_{L^2(\mu),L^2(\mu))}^2ds\right]^{\frac{1}{2}}\nonumber\\
\le\!\!\!\!\!\!\!\!&&\frac{1}{2}\mathbb{E}\Big[\sup_{s\in[0,t]}\big|\sqrt{\alpha}(\alpha-L)^{-\frac{1}{2}}X^\nu_\lambda(s)\big|_2^2\Big]+C_3\mathbb{E}\int_0^t(|X^\nu_\lambda(s)|^2_2+1)ds.
\end{eqnarray}
Taking \eref{eqnarray12} into \eref{eqnarray11}, we obtain that for $t\in[0,T]$,
\begin{eqnarray}\label{eqn1}
&&\mathbb{E}\left[\sup_{s\in[0,t]}\big|\sqrt{\alpha}(\alpha-L)^{-\frac{1}{2}}X^\nu_\lambda(s)\big|_2^2\right]+2\lambda\mathbb{E}\int_0^t\big\|\sqrt{\alpha}(\alpha-L)^{-\frac{1}{2}}X^\nu_\lambda(s)\big\|^2_{F_{1,2}}ds\nonumber\\
\le\!\!\!\!\!\!\!\!&&\big|\sqrt{\alpha}(\alpha-L)^{-\frac{1}{2}}x\big|_2^2+C\mathbb{E}\int_0^t\big(|X^\nu_\lambda(s)|^2_2+1)ds
\nonumber\\
&&+\frac{1}{2}\mathbb{E}\left[\sup_{s\in[0,t]}\big|\sqrt{\alpha}(\alpha-L)^{-\frac{1}{2}}X^\nu_\lambda(t)\big|_2^2\right].
\end{eqnarray}
Note that the first summand of the left hand side of the above
inequality is finite by \eref{eqna1}, since
$|\sqrt{\alpha}(\alpha-L)^{-\frac{1}{2}}\cdot|_2$ is equivalent to
$\|\cdot\|_{F^*_{1,2}}$. \eref{eqn1} shows that
\begin{eqnarray}\label{eqnarray13}
&&\mathbb{E}\left[\sup_{s\in[0,t]}\big|\sqrt{\alpha}(\alpha-L)^{-\frac{1}{2}}X^\nu_\lambda(s)\big|_2^2\right]
+4\lambda\mathbb{E}\int_0^t\big\|\sqrt{\alpha}(\alpha-L)^{-\frac{1}{2}}X^\nu_\lambda(s)\big\|^2_{F_{1,2}}ds\nonumber\\
\le\!\!\!\!\!\!\!\!&&2\big|\sqrt{\alpha}(\alpha-L)^{-\frac{1}{2}}x\big|_2^2+C\mathbb{E}\int_0^t(|X^\nu_\lambda(s)|_2^2+1)ds.
\end{eqnarray}

Now we want to take $\alpha\rightarrow\infty$, by \eref{convergence} we can easily estimate the first term in both sides of \eref{eqnarray13}, to estimate the second term in the left hand-side, we introduce
\begin{equation*}
\mathcal{E}^{ex}(u,u):=\left\{
                         \begin{array}{ll}
                           \mathcal{E}(u,u),~~~ \text{if}~~~ u\in F_{1,2}; \\
                           +\infty, ~~~~~~~\text{if}~~~ u\in L^2(\mu)\setminus F_{1,2},
                         \end{array}
                       \right.
\end{equation*}
and prove that $L^2(\mu)\ni u\mapsto \mathcal{E}^{ex}(u,u)$ is lower semicontinuous on $L^2(\mu)$. To clarify this, recall from \cite{Fukushima} that
\begin{eqnarray*}
\|u\|_{F_{1,2}}^2:=\mathcal{E}_1(u,u)=\mathcal{E}(u,u)+|u|_2^2,~~~ \forall u\in F_{1,2}~(=D(\mathcal{E})).
\end{eqnarray*}
Without lose of generality, let us consider a sequence $\{u_n\}_{n\in\Bbb{N}}\subset F_{1,2}$ such that
$$\liminf_{n\rightarrow\infty}\mathcal{E}(u_{n},u_{n})<\infty,$$ and with an element $u\in L^2(\mu)$ such that $u_n\rightarrow u$ in $L^2(\mu)$ as $n\rightarrow\infty$. Then there exists a subsequence $\{u_{n_k}\}_{k\in\Bbb{N}}$ such that
\begin{eqnarray*}
\lim_{k\rightarrow\infty}\mathcal{E}(u_{n_k},u_{n_k})=\liminf_{n\rightarrow\infty}\mathcal{E}(u_{n},u_{n}):=C
\end{eqnarray*}
and
\begin{eqnarray*}
\mathcal{E}_1(u_{n_k},u_{n_k})\rightarrow C+|u|_2^2,~~\text{as}~~k\rightarrow\infty.
\end{eqnarray*}
Hence $\{\|u_{n_k}\|^2_{F_{1,2}}\}_{k\in \Bbb{N}}$ is bounded, again there exists a subsequence $\{u_{n_{k_l}}\}_{l\in\Bbb{N}}$ and $u_0\in F_{1,2}$ such that $u_{n_{k_l}}\rightharpoonup u_0$ in $(F_{1,2},\|\cdot\|_{F_{1,2}})$ as $l\rightarrow\infty$, from \cite[Page:184, Theorem 2.2]{MR} we know that there exists a subsequence (for simplicity here we use the same notation) such that the Cesaro mean
\begin{eqnarray*}
\frac{1}{N}\sum_{l=1}^Nu_{n_{k_l}}\rightarrow u_0~~\text{strongly~in}~~(F_{1,2},\|\cdot\|_{F_{1,2}}),~~\text{as}~~N\rightarrow\infty,
\end{eqnarray*}
hence also in $L^2(\mu)$. So $u=u_0$, $u\in F_{1,2}$, $u_{n_{k_l}}\rightharpoonup u$ in $(F_{1,2},\|\cdot\|_{F_{1,2}})$ as $l\rightarrow\infty$, and because $(F_{1,2},\|\cdot\|_{F_{1,2}})$ is a Hilbert space, thus by the weakly lower semicontinuity of norms we have
\begin{eqnarray*}
\mathcal{E}(u,u)+|u|_2^2=\|u\|^2_{F_{1,2}}&&\!\!\!\!\!\!\!\!\leq \liminf_{l\rightarrow\infty}\|u_{n_{k_l}}\|^2_{F_{1,2}}\nonumber\\
&&\!\!\!\!\!\!\!\!=\liminf_{l\rightarrow\infty}\Big(\mathcal{E}(u_{n_{k_l}},u_{n_{k_l}})+|u_{n_{k_l}}|_2^2\Big)\nonumber\\
&&\!\!\!\!\!\!\!\!=\liminf_{l\rightarrow\infty}\mathcal{E}(u_{n_{k_l}},u_{n_{k_l}})+|u|_2^2.
\end{eqnarray*}
Therefore, we have
\begin{eqnarray*}
\mathcal{E}^{ex}(u,u)\leq\liminf_{n\rightarrow\infty}\mathcal{E}(u_n,u_n),~\forall u\in L^2(\mu),
\end{eqnarray*}
and furthermore,
\begin{eqnarray}\label{2}
\mathcal{E}^{ex}_1(u,u):=\mathcal{E}^{ex}(u,u)+|u|_2^2\leq\liminf_{n\rightarrow\infty}\mathcal{E}_1(u_n,u_n)=\liminf_{n\rightarrow\infty}\|u_n\|^2_{F_{1,2}},~\forall u\in L^2(\mu).
\end{eqnarray}

Let us continue to estimate \eref{eqnarray13}, letting $\alpha\to\infty$, by \eref{convergence}, \eref{2} and Fatou's lemma,
\begin{eqnarray}\label{claim111}
&&\!\!\!\!\!\!\!\!\mathbb{E}\Big[\sup_{s\in[0,T]}\big|X^\nu_\lambda(s)\big|_2^2\Big]+4\lambda\mathbb{E}\int_0^t\big\|X^\nu_\lambda(s)\big\|^2_{F_{1,2}}ds\nonumber\\
\leq&&\!\!\!\!\!\!\!\!\Bbb{E}\left[\sup_{s\in[0,t]}\liminf_{\alpha\rightarrow\infty}\big|\sqrt{\alpha}(\alpha-L)^{-\frac{1}{2}}X^\nu_\lambda(s)\big|_2^2\right]
+4\lambda\mathbb{E}\int_0^t\liminf_{\alpha\rightarrow\infty}\big\|\sqrt{\alpha}(\alpha-L)^{-\frac{1}{2}}X^\nu_\lambda(s)\big\|^2_{F_{1,2}}ds\nonumber\\
\leq&&\!\!\!\!\!\!\!\!\liminf_{\alpha\rightarrow\infty}\left\{\Bbb{E}\Big[\sup_{s\in[0,t]}\big|\sqrt{\alpha}(\alpha-L)^{-\frac{1}{2}}X^\nu_\lambda(s)\big|_2^2\Big]
+4\lambda\mathbb{E}\int_0^t\big\|\sqrt{\alpha}(\alpha-L)^{-\frac{1}{2}}X^\nu_\lambda(s)\big\|^2_{F_{1,2}}ds\right\}\nonumber\\
\leq&&\!\!\!\!\!\!\!\!\liminf_{\alpha\rightarrow\infty}\left[2\big|\sqrt{\alpha}(\alpha-L)^{-\frac{1}{2}}x\big|_2^2+C\mathbb{E}\int_0^t(|X^\nu_\lambda(s)|_2^2+1)ds\right]\nonumber\\
=&&\!\!\!\!\!\!\!\!2|x|_2^2+C\mathbb{E}\int_0^t\big(|X^\nu_\lambda(s)|_2^2+1\big)ds.
\end{eqnarray}
Then Gronwall's inequality yields the result.
\hspace{\fill}$\Box$
\end{proof}

\begin{claim}\label{cla:2}
$\{X^\nu_\lambda\}_{\lambda\in(0,1)}$ converges to an element
$X^\nu\in L^2([0,T]\times\Omega;L^2(\mu))$ as $\lambda\to0$.
\end{claim}

\begin{proof}
By It\^{o}'s formula we get that, for $\lambda,\lambda'\in(0,1)$ and
$t\in[0,T]$,
\begin{eqnarray}\label{eq:7}
&&\|X^\nu_\lambda(t)-X^\nu_{\lambda'}(t)\|^2_{F^*_{1,2,\nu}}\nonumber\\
&&+2\int_0^t\big\langle\Psi(X^\nu_\lambda(s)-\Psi(X^\nu_{\lambda'}(s)+\lambda
X^\nu_\lambda(s)-\lambda'X^\nu_{\lambda'}(s),X^\nu_\lambda(s)-X^\nu_{\lambda'}(s))\big\rangle_2ds\nonumber\\
=\!\!\!\!\!\!\!\!&&\int_0^t\big\|B(s,X^\nu_\lambda(s))-B(s,X^\nu_{\lambda'}(s))\big\|^2_{L_2(L^2(\mu),F^*_{1,2,\nu})}ds\nonumber\\
&&+2\int_0^t\big\langle
X^\nu_\lambda(s)-X^\nu_{\lambda'}(s),\big(B(s,X^\nu_\lambda(s))-B(s,X^\nu_{\lambda'}(s))\big)dW(s)\big\rangle_{F^*_{1,2,\nu}}.
\end{eqnarray}
\eref{eqn2} implies that for the second term on the left hand side
in \eref{eq:7} we have
\begin{eqnarray}\label{eqna3}
&&2\int_0^t\big\langle\Psi(X^\nu_\lambda(s))-\Psi(X^\nu_{\lambda'}(s))+\lambda
X^\nu_\lambda(s)-\lambda'X^\nu_{\lambda'}(s),X^\nu_\lambda(s)-X^\nu_{\lambda'}(s)\big\rangle_2ds\nonumber\\
\geq\!\!\!\!\!\!\!\!&&2\tilde{\alpha}\int_0^t\big|\Psi(X^\nu_\lambda(s))-\Psi(X^\nu_{\lambda'}(s))\big|_2^2ds\nonumber\\
~~\!\!\!\!\!\!\!\!&&+2\int_0^t\big\langle\lambda
X^\nu_\lambda(s)-\lambda'X^\nu_{\lambda'}(s),X^\nu_\lambda(s)-X^\nu_{\lambda'}(s)\big\rangle_2.
\end{eqnarray}
The assumption \textbf{(H2)}\textbf{(i)} yields
\begin{eqnarray}\label{eqnal1}
\int_0^t\big\|B(s,X^\nu_\lambda(s))-B(s,X^\nu_{\lambda'}(s))\big\|^2_{L_2(L^2(\mu),F^*_{1,2,\nu})}ds
\leq
C_1\int_0^t\big\|X^\nu_\lambda(s)-X^\nu_{\lambda'}(s)\big\|^2_{F^*_{1,2,\nu}}.
\end{eqnarray}
Using the BDG inequality and Young's inequality, for $t\in[0, T]$,
\eref{eq:7}-\eref{eqnal1} imply
\begin{eqnarray}
&&\mathbb{E}\Big[\sup_{s\in[0,t]}\big\|X^\nu_\lambda(s)-X^\nu_{\lambda'}(s)\big\|^2_{F^*_{1,2,\nu}}\Big]+2\tilde{\alpha}
\mathbb{E}\int_0^t\big|\Psi(X^\nu_\lambda(s))-\Psi(X^\nu_{\lambda'}(s))\big|_2^2ds\nonumber\\
\leq\!\!\!\!\!\!\!\!&& C_1\mathbb{E}\int_0^t\big\|X^\nu_\lambda(s)-X^\nu_{\lambda'}(s)\big\|^2_{F^*_{1,2,\nu}}ds\nonumber\\
&&-2\mathbb{E}\int_0^t\big\langle\lambda
X^\nu_\lambda(s)-\lambda'X^\nu_{\lambda'}(s),
X^\nu_\lambda(s)-X^\nu_{\lambda'}(s)\big\rangle_2ds\nonumber\\
&&+2\mathbb{E}\Big[\int_0^t\big\|X^\nu_\lambda(s)-X^\nu_{\lambda'}(s)\big\|^2_{F^*_{1,2,\nu}}\cdot
\big\|B(s, X^\nu_\lambda(s))- B(s,X^\nu_{\lambda'}(s))\big\|^2_{L_2(L^2(\mu),F^*_{1,2,\nu})} ds\Big]^{\frac{1}{2}}\nonumber\\
\leq\!\!\!\!\!\!\!\!&&\frac{1}{2}\mathbb{E}\Big[\sup_{s\in[0,t]}\big\|X^\nu_\lambda(s)-X^\nu_{\lambda'}(s)\big\|^2_{F^*_{1,2,\nu}}\Big]
+C\mathbb{E}\int_0^t\big\|X^\nu_\lambda(s)-X^\nu_{\lambda'}(s)\big\|^2_{F^*_{1,2,\nu}}ds\nonumber\\
&&+4(\lambda+\lambda')\mathbb{E}\int_0^t\big(\big|
X^\nu_\lambda(s)\big|_2^2+\big|X^\nu_{\lambda'}(s)\big|^2_2\big)ds.
\end{eqnarray}
Since $x\in L^2(\mu)$, Gronwall's lemma and Claim 3.1 imply for some
constant $C\!\!\in\!\!(0,\infty)$ independent of $\lambda, \lambda'$ such that
\begin{eqnarray}\label{eq:8}
\mathbb{E}\Big[\sup_{s\in[0,T]}\big\|X^\nu_\lambda(s)-X^\nu_{\lambda'}(s)\big\|^2_{F^*_{1,2}}\Big]+\mathbb{E}\int_0^T\big|\Psi(X^\nu_\lambda(s))-\Psi(X^\nu_{\lambda'}(s))\big|_2^2ds
\leq C(\lambda+\lambda').
\end{eqnarray}
\eref{eq:8} implies that there exists an $\mathscr{F}_t$-adapted
continuous $F^*_{1,2}$-valued process $\{X^\nu(t)\}_{t\in[0,T]}$
such that $X^\nu\in L^2\big(\Omega;C([0,T],F^*_{1,2})\big)$. This
together with Claim 3.1 implies that $X^\nu\in
L^2\big([0,T]\times\Omega;L^2(\mu)\big).$ \hspace{\fill}$\Box$
\end{proof}

\begin{claim}
$X^\nu$ satisfies \eref{equ:5}.
\end{claim}

\begin{proof}
From Claim 3.2, we know that
\begin{equation}\label{eq:10}
X_\lambda^\nu\to X^\nu\ \ \text{and}\ \ \int_0^\bullet
B(s,X_\lambda^\nu(s)) dW(s)\to\int_0^\bullet B(s,
X^\nu(s))dW(s),~~~~\lambda\to0
\end{equation}
in $L^2(\Omega;C([0,T],F^*_{1,2}))$. \eref{eq:5}, \eref{eq:10} yield
that
$$\int_0^\bullet\big(\Psi(X_\lambda^\nu(s))+\lambda
X_\lambda^\nu(s)\big)ds,\ \lambda>0,$$ converges to some element in
$L^2\big(\Omega;C([0,T],F_{1,2})\big)$ as $\lambda\to0$.  In
addition, by Claim 3.1, we have that, as $\lambda\to0$,
$$\int_0^\bullet \big(\Psi(X_\lambda^\nu(s))+\lambda X_\lambda^\nu(s)\big)ds\to\int_0^\bullet \Psi(X^\nu(s))ds$$
in $L^2\big(\Omega;L^2([0,T];L^2(\mu))\big)$. This and \eref{eq:10}
imply the claim. \hspace{\fill}$\Box$
\end{proof}

By lower semi-continuity of norms, \eref{equ:6} follows immediately from
Claim 3.1.

\vspace{2mm} {\bf Uniqueness} \vspace{2mm}

If $X^\nu_1$, $X^\nu_2$ are two solutions to \eref{equ:5}, we have
$\mathbb{P}-a.s.$
\begin{eqnarray}\label{equ:7}
&&\!\!\!\!\!\!\!\!X^\nu_1(t)-X^\nu_2(t)+(\nu-L)\int_0^t\Psi(X^\nu_1(s))-\Psi(X^\nu_2(s))ds\nonumber\\
=&&\!\!\!\!\!\!\!\!\int_0^t\big(B(s,X^\nu_1(s))-B(s,X^\nu_2(s))\big)dW(s),\
\forall\ t\in [0,T]
\end{eqnarray}
in $\Omega\times[0,T]\times E$. Apply It\^{o}'s formula in
$F^*_{1,2}$ to
$\big\|X^\nu_1(t)-X^\nu_2(t)\big\|^2_{F^*_{1,2,\nu}}$, we get
\begin{eqnarray}\label{eqnarray17}
&&\big\|X^\nu_1(t)-X^\nu_2(t)\big\|^2_{F^*_{1,2,\nu}}+2\int_0^t\big\langle\Psi(X^\nu_1(s))-\Psi(X^\nu_2(s)),X^\nu_1(s)-X^\nu_2(s)\big\rangle_2ds\nonumber\\
=\!\!\!\!\!\!\!\!&&\int_0^t\big\|B(s,X^\nu_1(s))-B(s,X^\nu_2(s))\big\|^2_{L_2(L^2(\mu),F^*_{1,2,\nu})}ds\nonumber\\
&&+2\int_0^t\big\langle X^\nu_1(s)-X^\nu_2(s),
\big(B(s,X^\nu_1(s))-B(s,X^\nu_2(s))\big)dW(s)\big\rangle_{F^*_{1,2,\nu}}.
\end{eqnarray}
The Lipschitz assumption \textbf{(H1)} on $\Psi$ implies that
\begin{eqnarray}\label{eqnarray18}
&&\big(\Psi(r)-\Psi(r')\big)(r-r')\geq (Lip
\Psi+1)^{-1}|\Psi(r)-\Psi(r')|^2,\ \text{for}\ r, r'\in \mathbb{R}.
\end{eqnarray}
Taking expectation of both sides to  \eref{eqnarray17}, then taking
\eref{eqnarray18} and \textbf{(H2)}\textbf{(i)} into account, we
obtain
\begin{eqnarray}\label{eqnarray19}
&&\!\!\!\!\!\!\!\!\mathbb{E}\big\|X^\nu_1(t)-X^\nu_2(t)\big\|^2_{F^*_{1,2,\nu}}+2(Lip
\Psi+1)^{-1}\mathbb{E}\int_0^t\big|\Psi(X^\nu_1(s))-\Psi(X^\nu_2(s))\big|_2^2ds\\
\leq
&&\!\!\!\!\!\!\!\!C_1\mathbb{E}\int_0^t\big\|X^\nu_1(s)-X^\nu_2(s)\big\|^2_{F^*_{1,2,\nu}}ds.
\end{eqnarray}
The second term in the left-hand side of \eref{eqnarray19} is
positive, thus we have
\begin{eqnarray*}
&&\mathbb{E}\big\|X^\nu_1(t)-X^\nu_2(t)\big\|^2_{F^*_{1,2,\nu}}\leq
C_1\mathbb{E}\int_0^t\big\|X^\nu_1(s)-X^\nu_2(s)\big\|^2_{F^*_{1,2,\nu}}ds.
\end{eqnarray*}
By Gronwall's inequality, we get $X^\nu_1=X^\nu_2$,
$\mathbb{P}-a.s.$, which indicates the uniquess.

\vspace{2mm} Hence the proof of Lemma 3.1 is complete.
\hspace{\fill}$\Box$
\end{proof}

\vspace{2mm} Based on Lemma 3.1, we shall now give the proof of our
main result Theorem 3.1. The idea is to prove that the sequence
$\{X^\nu\}_{\nu\in(0,1)}$ converges to the solution of \eref{eq:1}
as $\nu\to0$. The method that we use here is similar to that in
Lemma 3.1.

\vspace{2mm} {\bf Proof of Theorem 3.1 (continued)} \vspace{2mm}

First, we rewrite \eref{eq:3} as
\begin{eqnarray*}
dX^\nu(t)+(1-L)\Psi(X^\nu(t))dt=(1-\nu)\Psi(X^\nu(t))dt+B(t,X^\nu(t))dW(t).
\end{eqnarray*}

For the function $\varphi(x)=\frac{1}{2}\|x\|^2_{F^*_{1,2}}$ with $x\in F^*_{1,2}$, It\^{o}'s formula yields
\begin{eqnarray}\label{eqnarray14}
&&\frac{1}{2}\mathbb{E}\|X^\nu(t)\|^2_{F^*_{1,2}}+\int_0^t\big\langle \Psi(X^\nu(s)),X^\nu(s)\big\rangle_2ds\nonumber\\
=\!\!\!\!\!\!\!\!&&\frac{1}{2}\|x\|^2_{F^*_{1,2}}+(1-\nu)\mathbb{E}\int_0^t\big\langle \Psi(X^\nu(s)),X^\nu(s)\big\rangle_{F^*_{1,2}}ds\nonumber\\
&&+\frac{1}{2}\mathbb{E}\int_0^t\big\|B(s,
X^\nu(s))\big\|^2_{L_2(L^2(\mu),F^*_{1,2})}ds.
\end{eqnarray}
The condition \textbf{(H1)} implies
\begin{eqnarray}\label{eqna6}
\Psi(r)r\geq\tilde{\alpha}\cdot|\Psi(r)|^2,\ r\in \mathbb{R}.
\end{eqnarray}
By \eref{eqnarray14} and \eref{eqna6}, we have
\begin{eqnarray*}
&&\frac{1}{2}\mathbb{E}\|X^\nu(t)\|^2_{F^*_{1,2}}+\tilde{\alpha}\cdot\mathbb{E}\int_0^t|\Psi(X^\nu(s))|^2_2 ds\\
\leq\!\!\!\!\!\!\!\!&&\frac{1}{2}\|x\|^2_{F^*_{1,2}}+\mathbb{E}\int_0^t\|\Psi(X^\nu(s))\|_{F^*_{1,2}}\cdot\|X^\nu(s)\|_{F^*_{1,2}}ds\\
&&+\frac{1}{2}C_2\mathbb{E}\int_0^t\|X^\nu(s)\|^2_{F^*_{1,2}}ds.
\end{eqnarray*}
Since $L^2(\mu)$ is continuously embedded into $F^*_{1,2}$, Young's
inequality and the Gronwall's inequality yield that there exists a
constant $C\in(0, \infty)$ such that, for $t\in[0,T]$ and
$\nu\in(0,1)$,
\begin{eqnarray}\label{eq:9}
\mathbb{E}\|X^\nu(t)\|^2_{F^*_{1,2}}\leq C\|x\|^2_{F^*_{1,2}}.
\end{eqnarray}

In the following, we will prove the convergence of
$\{X^\nu\}_{\nu\in(0,1)}$. Applying It\^{o}'s formula to
$\|X^\nu(t)-X^{\nu'}(t)\|^2_{F^*_{1,2}}$, we get that, for all
$t\in[0,T]$,
\begin{eqnarray}\label{eqnarray15}
&&\|X^\nu(t)-X^{\nu'}(t)\|^2_{F^*_{1,2}}+2\int_0^t\big\langle(\Psi(X^\nu(s))-\Psi(X^{\nu'}(s)),X^\nu(s)-X^{\nu'}(s)\big\rangle_2 ds\nonumber\\
=\!\!\!\!\!\!\!\!&&2\int_0^t\big\langle \Psi(X^\nu(s))-\Psi(X^{\nu'}(s)),X^\nu(s)-X^{\nu'}(s)\big\rangle_{F^*_{1,2}}ds\nonumber\\
&&-2\int_0^t\big\langle \nu \Psi(X^\nu(s))-\nu' \Psi(X^{\nu'}(s)),X^\nu(s)-X^{\nu'}(s)\big\rangle_{F^*_{1,2}} ds\nonumber\\
&&+2\int_0^t\big\|B(s,X^\nu(s))-B(s,X^{\nu'}(s)\big\|^2_{L_2(L^2(\mu),
F^*_{1,2})}ds\nonumber\\
&&+2\int_0^t\big\langle
X^\nu(s)-X^{\nu'}(s),(B(s,X^\nu(s))-B(s,X^{\nu'}(s)))dW(s)\big\rangle_{F^*_{1,2}}ds.
\end{eqnarray}
The second term on the right hand side of \eref{eqnarray15} can be
dominated by
\begin{eqnarray}
&&-2\int_0^t\big\langle \nu\Psi(X^\nu(s))-\nu'\Psi(X^{\nu'}(s)),X^\nu(s)-X^{\nu'}(s)\big\rangle_{F^*_{1,2}} ds
\nonumber\\
\leq\!\!\!\!\!\!\!\!&&2C\int_0^t\big(\nu|\Psi(X^\nu(s))|_2+\nu'\big|\Psi(X^{\nu'}(s))\big|_2\big)\cdot
\|X^\nu(s)-X^{\nu'}(s)\|_{F^*_{1,2}}ds.
\end{eqnarray}
By assumption \textbf{(H1)} on $\Psi$ and \eref{eqna6}, we obtain
\begin{eqnarray}\label{eqna7}
&&2\int_0^t\big\langle(\Psi(X^\nu(s))-\Psi(X^{\nu'}(s)),X^\nu(s)-X^{\nu'}(s)\big\rangle_2ds\nonumber\\
=\!\!\!\!\!\!\!\!&&2\int_0^t\int_E\big(\Psi(X^\nu(s))-\Psi(X^{\nu'}(s))\big)\cdot\big(X^\nu(s)-X^{\nu'}(s)\big)d\mu ds\nonumber\\
\geq\!\!\!\!\!\!\!\!&& 2\int_0^t\int_E\tilde{\alpha}\big|\Psi(X^\nu(s))-\Psi(X^{\nu'}(s))\big|^2 d\mu ds\nonumber\\
=\!\!\!\!\!\!\!\!&&2\tilde{\alpha}\int_0^t\big|\Psi(X^\nu(s))-\Psi(X^{\nu'}(s))\big|^2_2ds.
\end{eqnarray}
\eref{eqnarray15}-\eref{eqna7} imply
\begin{eqnarray*}
&&\big\|X^\nu(t)-X^{\nu'}(t)\big\|^2_{F^*_{1,2}}+2\tilde{\alpha}\int_0^t\big|\Psi(X^\nu(s))-\Psi(X^{\nu'}(s))\big|^2_2ds\\
\leq\!\!\!\!\!\!\!\!&&C_1\int_0^t\big|\Psi(X^\nu(s))-\Psi(X^{\nu'}(s))\big|_2\cdot\big\|X^\nu(s)-X^{\nu'}(s)\big\|_{F^*_{1,2}}ds\\
&&+C_2\int_0^t\big(\nu|\Psi(X^\nu(s))|_2+\nu'|\Psi(X^{\nu'}(s))|_2\big)\cdot\big\|X^\nu(s)-X^{\nu'}(s)\big\|_{F^*_{1,2}}ds\\
&&+C_3\int_0^t\big\|X^\nu(s)-X^{\nu'}(s)\big\|^2_{F^*_{1,2}}ds\\
&&+2\int_0^t\big\langle
X^\nu(s)-X^{\nu'}(s),\big(B(s,X^\nu(s))-B(s,X^{\nu'}(s))\big)dW(s)\big\rangle_{F^*_{1,2}}ds.
\end{eqnarray*}
Taking expectation to both sides of the above inequality and using
 Young's and the BDG inequality for $p=1$, we obtain, for all $ t\in[0,T]$,
\begin{eqnarray*}
&&\mathbb{E}\Big[\sup_{s\in[0,t]}\big\|X^\nu(s)-X^{\nu'}(s)\big\|^2_{F^*_{1,2}}\Big]
+2\tilde{\alpha}\mathbb{E}\int_0^t\big|\Psi(X^\nu(s))-\Psi(X^{\nu'}(s))\big|_2^2ds\\
\leq\!\!\!\!\!\!\!\!&&\frac{1}{2}\mathbb{E}\Big[\sup_{s\in[0,t]}\big\|X^\nu(s)-X^{\nu'}(s)\big\|^2_{F^*_{1,2}}\Big]+
\tilde{\alpha}\mathbb{E}\int_0^t\big|\Psi(X^\nu(s))-\Psi(X^{\nu'}(s))\big|_2^2ds\\
&&+C_1\mathbb{E}\int_0^t\big\|X^\nu(s)-X^{\nu'}(s)\big\|_{F^*_{1,2}}ds
+C_2\mathbb{E}\int_0^t\big(\nu|\Psi(X^\nu(s))|^2_2+\nu'|\Psi(X^{\nu'}(s))|_2^2\big)ds.
\end{eqnarray*}
This yields
\begin{eqnarray}
&&\mathbb{E}\Big[\sup_{s\in[0,t]}\big\|X^\nu(s)-X^{\nu'}(s)\big\|^2_{F^*_{1,2}}\Big]
+2\tilde{\alpha}\mathbb{E}\int_0^t\big|\Psi(X^\nu(s))-\Psi(X^{\nu'}(s))\big|_2^2ds\nonumber\\
\leq\!\!\!\!\!\!\!\!&&C_1\mathbb{E}\int_0^t\big\|X^\nu(s)-X^{\nu'}(s)\big\|_{F^*_{1,2}}ds\nonumber\\
&&+C_2(\nu+\nu')\mathbb{E}\int_0^t\big(|\Psi(X^\nu(s))|^2_2+|\Psi(X^{\nu'}(s))|_2^2\big)ds.
\end{eqnarray}

Note that if the initial value $x\in F^*_{1,2}$ and \eref{eq:2} is
satisfied, we have \eref{eq:9}. If $x\in L^2(\mu)$, we have
\eref{equ:6}. Hence, Gronwall's inequality and Young's
inequality yields that there exists a positive constant
$C\in(0,\infty)$ which is independent of $\nu, \nu'$ such that
\begin{eqnarray}\label{eqnarray20}
&&\mathbb{E}\Big[\sup_{s\in[0,T]}\big\|X^\nu(s)-X^{\nu'}(s)\big\|^2_{F^*_{1,2}}\Big]+\mathbb{E}\int_0^T\big|\Psi(X^\nu(s))-\Psi(X^{\nu'}(s))\big|_2^2ds\nonumber\\
\leq\!\!\!\!\!\!\!\!&&C(\nu+\nu').
\end{eqnarray}
Hence, there exists an $(\mathscr{F}_t)_{t\geq0}$-adapted process
$X\!\!\in \!\!L^2(\Omega;C([0,T], F^*_{1,2}))\cap
L^2([0,T]\times\Omega; L^2(\mu))$ such that $X^\nu\rightarrow X$ in
$L^2(\Omega;C([0,T], F^*_{1,2}))$ as $\nu\rightarrow0$.

\vspace{2mm} Next, we will prove $X$ satisfies \eref{equ:3}, the
proof is similar to that in Claim 3.3.

\vspace{2mm} From above we know that
\begin{eqnarray}\label{eqnarray28}
&&X^\nu\rightarrow X\ \text{and}\ \int_0^\bullet
B(s,X^\nu(s))dW(s)\rightarrow \int_0^\bullet
B(s,X(s))dW(s),~\nu\rightarrow0
\end{eqnarray}
in $L^2\big(\Omega;C([0,T];F^*_{1,2})\big)$. \eref{equ:5} and
\eref{eqnarray28} yield that
$$\int_0^\bullet\Psi(X^\nu(s))ds,\ \nu>0,$$ converges to some element in
$L^2\big(\Omega;C([0,T],F_{1,2})\big)$ as $\nu\to0$, and from
\eref{eqnarray20} we know as $\nu\to0$,
$$\int_0^\bullet \Psi(X^\nu(s))ds\to\int_0^\bullet \Psi(X(s))ds$$
in $L^2\big(\Omega;L^2([0,T];L^2(\mu))\big)$. Hence $X$ satisfies
\eref{equ:3}. This completes the existence proof for Theorem 3.1.

\vspace{2mm} {\bf Uniqueness} \vspace{2mm}

If $X_1$ and $X_2$ are two solutions to \eref{eq:1}, we have
$\mathbb{P}-a.s.$
\begin{eqnarray}\label{eqnarray21}
&&\!\!\!\!\!\!\!\!X_1(t)-X_2(t)\!-\!L\!\int_0^t\!\big(\Psi(X_1(s))-\Psi(X_2(s))\big)ds\nonumber\\
=&&\!\!\!\!\!\!\!\!\int_0^t\!\big(B(s,X_1(s))-B(s,X_2(s))\big)dW(s),\
t\in [0,T]
\end{eqnarray}
in $\Omega\times[0,T]\times E$.

Rewrite \eref{eqnarray21} as
\begin{eqnarray}\label{eqnarray22}
&&X_1(t)-X_2(t)+(1-L)\int_0^t\big(\Psi(X_1(s))-\Psi(X_2(s))\big)ds\nonumber\\
=\!\!\!\!\!\!\!\!\!&&\int_0^t\big(\Psi(X_1(s))-\Psi(X_2(s))\big)ds+\int_0^t\big(B(s,X_1(s))-B(s,X_2(s))\big)dW(s)
\end{eqnarray}

Apply It\^{o}'s formula to $\|X_1(t)-X_2(t)\|^2_{F^*_{1,2}}$ in
$F^*_{1,2}$, it follows
\begin{eqnarray}\label{eqnarray23}
&&\big\|X_1(t)-X_2(t)\big\|^2_{F^*_{1,2}}+2\int_0^t\big\langle\Psi(X_1(s))-\Psi(X_2(s)),X_1(s)-X_2(s)\big\rangle_2ds\nonumber\\
=\!\!\!\!\!\!\!\!\!&&2\int_0^t\big\langle
\Psi(X_1(s))-\Psi(X_2(s)),X_1(s)-X_2(s)\big\rangle_{F^*_{1,2}}ds\nonumber\\
&&+2\int_0^t\big\langle
X_1(s)-X_2(s),\big(B(s,X_1(s))-B(s,X_2(s))\big)dW(s)\big\rangle_{F^*_{1,2}}\nonumber\\
&&+\int_0^t\big\|B(s,X_1(s))-B(s,X_2(s))\big\|_{L_2(L^2(\mu),F^*_{1,2})}ds.
\end{eqnarray}
Taking expectation of both sides, \eref{eqn2} and
\textbf{(H2)}\textbf{(i)} yield that
\begin{eqnarray*}
&&\mathbb{E}\big\|X_1(t)-X_2(t)\big\|^2_{F^*_{1,2}}+2\widetilde{\alpha}\mathbb{E}\int_0^t\big|\Psi(X_1(s))-\Psi(X_2(s))\big|_2^2ds\nonumber\\
\leq\!\!\!\!\!\!\!\!\!&&2\mathbb{E}\int_0^t\big\|\Psi(X_1(s))-\Psi(X_2(s))\big\|_{F^*_{1,2}}\cdot
\big\|X_1(s)-X_2(s)\big\|_{F^*_{1,2}}ds\nonumber\\
&&+C_1\mathbb{E}\int_0^t\big\|X_1(s)-X_2(s)\big\|^2_{F^*_{1,2}}ds.
\end{eqnarray*}
Using Young's inequality to the above inequality, and since
$L^2(\mu)\subset F^*_{1,2}$ continuously and densely, we obtain
\begin{eqnarray*}
&&\mathbb{E}\big\|X_1(t)-X_2(t)\big\|^2_{F^*_{1,2}}+2\widetilde{\alpha}\mathbb{E}\int_0^t\big|\Psi(X_1(s))-\Psi(X_2(s))\big|_2^2ds\nonumber\\
\leq\!\!\!\!\!\!\!\!\!&&2\widetilde{\alpha}\mathbb{E}\int_0^t\big|\Psi(X_1(s))-\Psi(X_2(s))\big|_2^2ds+
C\mathbb{E}\int_0^t\big\|X_1(s)-X_2(s)\big\|^2_{F^*_{1,2}}ds\nonumber\\
&&+C_1\mathbb{E}\int_0^t\big\|X_1(s)-X_2(s)\big\|^2_{F^*_{1,2}}ds.
\end{eqnarray*}
Therefore,
\begin{eqnarray*}
&&\mathbb{E}\big\|X_1(t)-X_2(t)\big\|^2_{F^*_{1,2}}\leq
(C+C_1)\mathbb{E}\int_0^t\big\|X_1(s)-X_2(s)\big\|^2_{F^*_{1,2}}ds.
\end{eqnarray*}
By Gronwall's lemma, we get $X_1=X_2$ $\mathbb{P}-a.s.$.
Consequently, Theorem 3.1 is completely proved. \hspace{\fill}$\Box$

\section{Some Examples}
\setcounter{equation}{0}
 \setcounter{definition}{0}

\subsection{Classical Dirichlet forms with densities}

We apply Theorem 3.1 to the Friedrichs extension of the operator
\begin{eqnarray}\label{eqnarray16}
L_0u=\Delta u+2\frac{\nabla\rho}{\rho}\cdot\nabla u,\ \ u\in
C_0^\infty(\mathbb{R}^d),
\end{eqnarray}
on $L^2(\mathbb{R}^d, \rho^2dx)$, where $dx$ denotes Lebesgue
measure and $\rho\in H^1(\mathbb{R}^d)$. Here $H^1$ is the usual
Sobolev space and $H^{-1}$ denotes its dual space.

\vspace{2mm} Clearly, $\Delta u\in L^2(\mathbb{R}^d,\rho^2dx)$,
since $u\in C_0^\infty(\mathbb{R}^d)$, and
$\frac{\nabla\rho}{\rho}\cdot\nabla u\in
L^2(\mathbb{R}^d,\rho^2dx)$, since
\begin{eqnarray*}
&&\int\big|\frac{\nabla \rho}{\rho}\cdot \nabla
u\big|^2\rho^2dx=\int|\nabla\rho|^2|\nabla u|^2dx\leq C\int
|\nabla\rho|^2 dx<\infty.
\end{eqnarray*}
 Hence $L_0$ is a well-defined linear operator from
$C_0^\infty(\mathbb{R}^d)$ to $L^2(\mathbb{R}^d,\rho^2dx)$. In
addition, by definition \eref{eqnarray16}, for all $u, v \in
C_0^\infty(\mathbb{R}^d)$, we have
\begin{eqnarray*}
\int L_0u\cdot v\rho^2dx&&\!\!\!\!\!\!\!\!=\int(\Delta
u+2\frac{\nabla\rho}{\rho}\cdot\nabla u)\ v\rho^2\ dx\\
&&\!\!\!\!\!\!\!\!=\int\Delta u\ v\rho^2\ dx+2\int\frac{\nabla\rho}{\rho}\cdot\nabla u\ v\rho^2\ dx\\
&&\!\!\!\!\!\!\!\!=\int div\nabla u\ v\rho^2\ dx+2\int\frac{\nabla\rho}{\rho}\cdot\nabla u\ v\rho^2\ dx\\
&&\!\!\!\!\!\!\!\!=-\int\nabla u\cdot\nabla(v\rho^2)\ dx+2\int\frac{\nabla\rho}{\rho}\cdot\nabla u\ v\rho^2\ dx\\
&&\!\!\!\!\!\!\!\!=-\int\nabla u\cdot\nabla v\ \rho^2\ dx-\int\nabla u\cdot 2v\rho\nabla \rho\ dx+2\int\frac{\nabla\rho}{\rho}\cdot\nabla u\ v\rho^2\ dx\\
&&\!\!\!\!\!\!\!\!=-\int\nabla u\cdot \nabla v\ \rho^2\ dx=\int
u\cdot L_0v\ \rho^2\ dx,
\end{eqnarray*}
which implies both that $L_0$ is a symmetric operator and negative
definite. Hence $(L_0,C_0^\infty(\mathbb{R}^d))$ is closable in
$L^2(\mathbb{R}^d,\rho^2dx)$ (\cite{MR}). Let $(L, D(L))$ be its
closure. According to \cite[Proposition 3.3]{MR}, we hence know that
there exists a Dirichlet form $(\mathcal {E}, D(\mathcal {E}))$ on
$L^2(\mathbb{R}^d,\rho^2dx)$, which is in fact the closure of
$$\mathcal {E}(u, v)=\int \langle \nabla u, \nabla v\rangle_{\mathbb{R}^d}\rho^2dx, \text{\ \ for all\ \ } u, v\in C_0^\infty(\mathbb{R}^d),$$
on $L^2(\mathbb{R}^d, \rho^2dx)$ with generator $(L, D(L))$. $(L,
D(L))$ is thus the Friedrichs extension of $(L_0,
C_0^\infty(\mathbb{R}^d))$ on $L^2(\mathbb{R}^d,\rho^2dx)$. As a
result, we know $T_t=e^{tL}$ is the strongly continuous contraction
sub-Markovian semigroup on $L^2(\mathbb{R}^d,\rho^2dx)$ with
generator $L$, which is negative definite and self-adjoint.

\vspace{2mm}
Consider the following equation
\begin{equation} \label{eq:11}
\left\{ \begin{aligned}
&dX(t)-L\Psi(X(t))dt=B(t,X(t))dW(t),\ \text{in}\ [0,T]\times \mathbb{R}^d,\\
&X(0)=x \in L^2(\mathbb{R}^d,\rho^2dx)\ \text{or}\ x\in F^*_{1,2}\
\text{respectively\ if\ in addition \eref{eq:2} holds.}
\end{aligned} \right.
\end{equation}
where $(L,D(L))$ is the Friedrichs extension of $(L_0,
C_0^\infty(\mathbb{R}^d))$. $\Psi$ and $B$ satisfy \noindent
\textbf{(H1)} and \noindent \textbf{(H2)} respectively, whereas the
corresponding spaces are $E:=\mathbb{R}^d$,
$L^2(\mu):=L^2(\mathbb{R}^d,\rho^2dx)$, $F_{1,2}:=D(\mathcal {E})$
and $F^*_{1,2}:=(D(\mathcal {E}))^*$.

\vspace{2mm} As a consequence, we can apply Theorem 3.1.

\subsection{General regular symmetric case}

The example in Section 4.1 is a special case of the examples in
\cite[Chapter 2]{MR}: Let $E:=U\subset \mathbb{R}^d$, $U$ open, and
$m$ a positive Radon measure on $U$ such that supp$[m]=U$. For $u,
v\in C_0^\infty(U)$, define
\begin{eqnarray}\label{eqnarray7}
\mathcal {E}(u,
v):&&\!\!\!\!\!\!\!\!\!=\sum_{i,j=1}^d\int\frac{\partial u}{\partial
x_i}\frac{\partial v}{\partial x_j}d\nu_{ij}\nonumber\\
&&\!\!\!\!\!\!\!\!\!\ \ \ +\int_{U\times
U\setminus\Delta}\big(u(x)-u(y)\big)\big(v(x)-v(y)\big)J(dx,
dy)+\int\!\! uv~dk.
\end{eqnarray}
Here $k$ is a positive Radon measure on $U$ and $J$ is a symmetric
positive Radon measure on $U\times U\setminus\Delta$, where
$\Delta:=\{(x,x)|x\in U\}$, such that for all $u\in C_0^\infty(U)$
\begin{eqnarray}\label{eqnarray8}
\int|u(x)-u(y)|^2J(dxdy)<\infty.
\end{eqnarray}
For $1\leq i,j\leq d$, $\nu_{ij}$ is a Radon measure on $U$ such
that for every $K\subset U$, $K$ compact, $\nu_{ij}(K)=\nu_{ji}(K)$
and $\sum_{i,j=1}^d\xi_i\xi_j\nu_{ij}(K)\geq 0$ for all
$\xi_i,\cdot\cdot\cdot , \xi_d\in \mathbb{R}^d$.

\vspace{2mm} Then $(\mathcal {E}, C_0^\infty(U))$ is a densely
defined symmetric positive definite bilinear form on $L^2(U;m)$.

\vspace{2mm} Suppose that $(\mathcal {E}, C_0^\infty(U))$ is
closable on $L^2(U; m)$ and let $(\mathcal {E}, D(\mathcal {E}))$ be
its closure, then $(\mathcal {E}, D(\mathcal {E}))$ is a symmetric
Dirichlet form. Hence by \cite{MR} we know there exists a
self-adjoint negative definite linear operator $(L, D(L))$ on
$L^2(U; m)$ defined by
\begin{eqnarray}\label{eqnarray26}
D(L):=\{u\in D(\mathcal {E})|\ \exists\ Lu\in L^2(U; m),~~ s.t.
~~\mathcal {E}(u,v)=(-Lu,v), \forall v\in D(\mathcal {E})\}.
\end{eqnarray}
Hence $(L, D(L))$ is the generator of a sub-Markovian strongly
continuous contraction semigroup $(T_t)_{t>0}$ on $L^2(U; m)$ given
by
\begin{eqnarray}\label{eqnarray27}
T_t:=e^{tL},\ t>0.
\end{eqnarray}
\vspace{2mm} Consider the following equation
\begin{equation} \label{eq:12}
\left\{ \begin{aligned}
&dX(t)-L\Psi(X(t))dt=B(t,X(t))dW(t),\ \text{in}\ [0,T]\times U,\\
&X(0)=x\in L^2(U, m)\ \text{or}\ x\in F^*_{1,2}\ \text{respectively\
if\ in addition \eref{eq:2} holds.}
\end{aligned} \right.
\end{equation}
where $(L, D(L))$ (defined in \eref{eqnarray26}) is the generator of
a sub-Markovian strongly continuous contraction semigroup
$(T_t)_{t>0}$ on $L^2(U; m)$. $\Psi$ and $B$ satisfy \noindent
\textbf{(H1)} and \noindent \textbf{(H2)} respectively, whereas the
corresponding spaces are $E:=U$, $L^2(\mu):=L^2(U, m)$,
$F_{1,2}:=D(\mathcal {E})$ and $F^*_{1,2}:=(D(\mathcal {E}))^*$.

\vspace{2mm} Consequently, from Theorem 3.1 we know there exists a
unique strong solution to \eref{eq:12}.

\medskip
\noindent \textbf{Remark 4.1:}

\vspace{2mm} \textbf{(i)} Our result thus in particular applies to
the case when $L$ is the fractional Laplace operator
$$L:=-(-\Delta)^\alpha,\ \alpha\in(0,1],$$
since it is just a special case of the above (see \cite[Chapter
2]{MR}).

\vspace{2mm} \textbf{(ii)} Using Dirichlet form theory on fractals,
Theorem 3.1 also applies to the case when $L$ is the Laplace
operator on a fractal, and the corresponding state space $E$ is this
fractal, (see \cite{JK, K2} for details).

\medskip
\noindent \textbf{Acknowledgements}

The second and third authors acknowledge the financial support from
NSF of China and the project funded by the Priority Academic Program
Development of Jiangsu Higher Education Institutions. Support by the
DFG' through CRC 701 is also gratefully acknowledged.

\end{document}